\theoremstyle{thmstyleone}%
\newtheorem{theorem}{Theorem}
\newtheorem{Cor}[theorem]{Corollary}%
\theoremstyle{thmstyletwo}%
\newtheorem{remark}{Remark}%
\newtheorem{lem}{Lemma}%
\theoremstyle{thmstylethree}%
\newtheorem{definition}{Definition}%
\begin{document}

\title[Anisotropic elliptic equations]{A Class of Non-linear  Anisotropic Elliptic problems
with Unbounded Coefficients and Singular Quadratic Lower Order Terms}

 
\author*[1]{\fnm{Fessel} \sur{Ach-houd}}\email{fessel.achhoud@studenti.unime.it}

\author*[2,3]{\fnm{Hichem} \sur{Khelifi}}\email{h.khelifi@univ-alger.dz}

\affil[1]{\orgdiv{Dipartimento di Matematica e Informatica}, \orgname{University of Catania}, \orgaddress{\street{Viale A. Doria, 6}, \city{Catania}, \postcode{95125}, \state{Italy}}}

\affil[2]{\orgdiv{Department of Mathematics, Faculty of Sciences}, \orgname{University of Algiers 1, Benyoucef BENKHEDDA}, \orgaddress{\street{2 Rue Didouche Mourad}, \city{Algiers}, \postcode{16000}, \state{Algeria}}}

\affil[3]{\orgdiv{Laboratoire d'\'equations aux d\'eriv\'ees partielles non lin\'eaires et histoire des math\'ematiques}, \orgname{\'Ecole normale sup\'erieure}, \orgaddress{\street{B. P 92, Vieux Kouba}, \city{Algiers}, \postcode{16050}, \state{Algeria}}}


\abstract{In this work, we study the existence and regularity results of anisotropic elliptic equations with a singular lower order term that grows naturally with respect to the gradient and unbounded coefficients. We take up the following model problem
\begin{equation*}
\left\{\begin{array}{ll}-\displaystyle\sum\limits_{j\in J} D_{j}\left(\left[ 1+ u^{q}\right]\vert D_{j}u\vert^{p_{j}-2} D_{j}u\right)+\sum\limits_{j\in J}\frac{\vert D_{j}u\vert^{p_{j}}}{ u^{\theta}}=f& \hbox{in}\;\Omega, \\
u>0& \hbox{in}\;\Omega, \\
 u =0 & \hbox{on}\; \partial\Omega, \end{array} 
 \right.
\end{equation*}
$\Omega$ is a bounded domain in $\mathbb{R}^{N}$, $j\in J=\{1,2,\ldots,N\},$ $q>0$, $0< \theta<1$,  $2\leq p_{1}\leq p_{2}\leq... \leq p_{N}$ and $f\in L^{1}(\Omega)$. Our study's conclusions will depend on the values of $q$ and $\theta$.}

\keywords{Anisotropic operator; boundary singularity;  bounded solutions}


\pacs[MSC Classification]{35J75, 35J60, 35Q35}

\maketitle

\maketitle

\section{introduction}

Anisotropic elliptic equations describe physical and biological phenomena where
diffusion or conductivity varies with direction, such as flow in heterogeneous
porous media \cite{R18}, epidemic propagation across irregular landscapes
\cite{R19}, and thermistor models exhibiting directional heat–electric
responses \cite{HHH1,HHH2,HHH3}. Singular elliptic equations also arise in many
applications, featuring blow-up behaviour, boundary singularities
\cite{CD,ZD}. These problems often require
techniques beyond the classical isotropic theory \cite{V}, especially when
combined with nonstandard growth conditions \cite{R1,R2}.  

Let \(\Omega \subset \mathbb{R}^{N}\) be a bounded open set, where \(N \geq 2\), and let 
\((p_{j}) = (p_{1}, \ldots, p_{N}) \in \mathbb{R}^{N}\) satisfy
\[
1 \leq p^{-} = \min_{j \in J}\{p_{j}\} \leq p^{+} = \max_{j \in J}\{p_{j}\} < \infty .
\]

The anisotropic Sobolev spaces \(W^{1,(p_{j})}(\Omega)\) and \(W_{0}^{1,(p_{j})}(\Omega)\) are defined respectively by
\[
\begin{aligned}
W^{1,(p_{j})}(\Omega)
&= \big\{ v \in W^{1,1}(\Omega) :\; D_{j}v \in L^{p_{j}}(\Omega)\ \text{for all } j \in J \big\},\\[2mm]
W_{0}^{1,(p_{j})}(\Omega)
&= \big\{ v \in W_{0}^{1,1}(\Omega) :\; D_{j}v \in L^{p_{j}}(\Omega)\ \text{for all } j \in J \big\}.
\end{aligned}
\]

Equivalently, \(W_{0}^{1,(p_{j})}(\Omega)\) is the closure of \(C_{0}^{\infty}(\Omega)\) with respect to the norm
\[
\|v\|_{W_{0}^{1,(p_{j})}(\Omega)}
= \sum_{j \in J} \|D_{j}v\|_{L^{p_{j}}(\Omega)}.
\]

Endowed with this norm, \(W_{0}^{1,(p_{j})}(\Omega)\) is a separable and reflexive Banach space. 
Its dual space is denoted by \(W^{-1,(p'_{j})}(\Omega)\), where for each \(j \in J\), \(p_{j}'\) is the conjugate exponent of \(p_{j}\), that is,
\[
\frac{1}{p_{j}} + \frac{1}{p_{j}'} = 1.
\]

This article's main goal is to investigate the following problem.
\begin{equation}
\label{(1)}
\left\{\begin{array}{ll}\displaystyle-\sum\limits_{j\in J} D_{j}\left(\left[a(x)+u^{q}\right]\vert D_{j}u\vert^{p_{j}-2} D_{j}u\right)+b(x)\sum\limits_{j\in J}\frac{\vert D_{j}u\vert^{p_{j}}}{ u^{\theta}}=f& \hbox{in}\;\Omega, \\
u>0& \hbox{in}\;\Omega, \\
 u  =0 & \hbox{on}\; \partial\Omega, \end{array}
 \right.
\end{equation}
where $\Omega$ is a bounded open subset of $\mathbb{R}^{N}$ ($N\geq 3$), $f$, $q$, $\theta$, and $p_{j}$ are satisfies  the following conditions 
\begin{align}
&\label{(2)}
q>0,\quad 0<\theta<1,\\
&\label{(3)}
f\geq0,\;\; f\not\equiv0,\quad f\in L^{1}(\Omega),\\
&\label{(4)}
2\leq p_{1}\leq p_{2}\leq ... \leq p_{N-1}\leq p_{N}\quad \mbox{and}\quad 2\leq \overline{p}:=N\left(\sum\limits_{j\in J}\frac{1}{p_{j}}\right)^{-1}<N.
\end{align}
For some positive numbers $\alpha,\beta,\mu$, and $\nu$, we assume in this study that $a,b$ are measurable functions satisfying the following conditions.
\begin{align}
&\label{(5)}
 0<\alpha\leq a(x)\leq \beta,\\
&\label{(6)}
0<\mu\leq b(x)\leq \nu.
\end{align}
The isotropic case of problem \eqref{(1)}, where $p_j$ is constant for all $j \in J$, has been extensively studied by several authors, with notable contributions in \cite{olivia, boca92, boca97, dalill}. Also, a problem like \eqref{(1)} without the singular term was studied in more general context in \cite{FR1,FA1}. 

A further motivation for this work comes from the isotropic analogue of problem \eqref{(1)}, studied in \cite{R0}. There, the authors considered
\begin{equation}
\label{P3}
\left\{\begin{array}{ll}\displaystyle-\mbox{div}\left(\left[a(x)+u^{q}\right]\nabla u\right)+b(x)\frac{\vert \nabla u\vert^{2}}{ u^{\theta}}=f& \hbox{in}\;\Omega, \\
u>0& \hbox{in}\;\Omega, \\
 u  =0 & \hbox{on}\; \partial\Omega, \end{array}
 \right.
\end{equation}
with $q>0$, $0<\theta<1$, and $f\in L^{1}(\Omega)$. They proved the existence of nonnegative solutions and derived precise summability results depending on the balance between the exponent of the  absorption term $q$ and the singular term's exponent $\theta$. 
The present work investigates the same type of phenomenon in an \textit{anisotropic setting}, where the diffusion operator involves directional exponents $(p_{j})$ and the harmonic mean $\overline{p}$ replaces the Laplacian structure. 

The problem \eqref{P3} in the case of $q = 0$ has been addressed by several authors, including \cite{A1, A2, A3, A4, G1, G2}.

 In the context of anisotropic elliptic problems like \eqref{(1)} with no absorption term (i.e., when $q=0$, $a\equiv 1$, and $b\equiv 0)$, Di Castro in \cite{R2} addressed the following problem:
\begin{equation*}
\left\{\begin{array}{ll}\displaystyle-\sum\limits_{j\in J} D_{j}\left(\vert D_{j}u\vert^{p_{j}-2} D_{j}u\right)=f& \hbox{in}\;\Omega, \\
 u  =0 & \hbox{on}\; \partial\Omega, \end{array}
 \right.
\end{equation*}
where $p_j$ increases with respect to $j$ such that $2 - \frac{1}{N} \leq \overline{p} < N$, and $f \in L^1(\Omega)$. The author demonstrated that this problem admits a  solution $u \in W_0^{1, (s_j)}(\Omega)$, where
\begin{align}
\label{HH}
1\leq s_{j}<\frac{N(\overline{p}-1)}{\overline{p}(N-1)}p_{j},\quad\forall\; j\in J.
\end{align}
and for all  $\Phi \in C_0^1(\Omega),$
\begin{equation}\label{HH1}
\sum\limits_{j\in J}\int_{\Omega}\vert D_{j}u\vert^{p_{j}-2}D_{j}u D_{j}\Phi\,dx =\int_{\Omega}f\Phi\,dx .
\end{equation}

Further, this problem has been studied under different assumptions on the data in \cite{Stamp, BO1, BO2, OR1, BO3}, particularly when $p_j$ is constant for all $j \in J$.

 In the case where $q = 0$ and with lower order terms with sign conditions, Di Castro in \cite{R1} explored the following  general anisotropic problem
\begin{equation}
\label{P2}
\left\{\begin{array}{ll}\displaystyle-\sum\limits_{j\in J} D_{j}\left(\vert D_{j}u\vert^{p_{j}-2} D_{j}u\right)+\sum\limits_{j\in J}g_{j}(x,u,\nabla u)=f& \hbox{in}\;\Omega, \\
 u  =0 & \hbox{on}\; \partial\Omega, \end{array}
 \right.
\end{equation}
where $f \in L^1(\Omega)$ and $g_j: \Omega \times \mathbb{R} \times \mathbb{R}^N \to \mathbb{R}$ is a Carath\'eodory function satisfying the following conditions
\begin{itemize}
\item[(H1)] $g_{j}(x,s,\xi)\mbox{sgn}(s)\geq0$ for a.e. $x\in \Omega$, for all $s\in \mathbb{R}$ and $\xi\in \mathbb{R}^{N}$;
\item[(H2)] $\vert g_{j}(x,s,\xi)\vert\leq h(\vert s\vert)\vert\xi_{j}\vert^{p_{j}}$, for a.e. $x\in \Omega$, for all $s\in \mathbb{R}$ and $\xi\in \mathbb{R}^{N}$, where $h:\mathbb{R}\rightarrow \mathbb{R}^{+}$ is a continuous, nondecreasing function such that $h(t)>\gamma>0$ for $\vert t\vert$ sufficiently large;
\item[(H3)] There exists $\nu>0$ such that $\vert g_{j}(x,s,\xi)\vert\geq \nu \vert \xi_{j}\vert^{p_{j}}$ for a.e. $x\in \Omega$, for all $s\in \mathbb{R}$ and $\xi\in \mathbb{R}^{N}$.
\end{itemize}
The author investigated the existence and regularity of solutions to this problem, yielding the following results
\begin{itemize}
\item[$\bullet$] If conditions (H1) and (H2) hold, and if $p_j$ satisfies the inequality
$$\frac{\overline{p}(N-1)}{N(\overline{p}-1)}<p_{j}<\frac{\overline{p}(N-1)}{N-\overline{p}},\quad \forall\; j\in J,$$
then there exists at least one  solution $u \in W_0^{1, (s_j)}(\Omega)$, where $s_j$ is defined in \eqref{HH} and \eqref{HH1} holds.
\item[$\bullet$] If conditions (H1), (H2), and (H3) hold, then a weak solution $u$ exists in $W_0^{1, (p_j)}(\Omega)$.
\end{itemize} 

On the other hand, when \(b = 0\) and \(q \neq 0\), we have recently addressed, see \cite{FH}, a corresponding problem involving a singular right-hand side of the type \(\displaystyle\frac{f}{u^{l}}\), with \(l > 0\).

We end this  brief history of the problem by recalling the paper \cite{bacii}, in their work, the authors establish the existence of weak solutions for a broad class of anisotropic Dirichlet problems of the type
\[
\mathcal{A}u + \Phi(x,u,\nabla u) = \Psi(u,\nabla u) + \mathcal{L}u + f \quad \text{in } \Omega,
\]
where \(\Omega \subset \mathbb{R}^N\) \((N \ge 2)\) is a bounded domain and \(f \in L^1(\Omega)\) is arbitrary.
The anisotropic principal operator is
\[
\mathcal{A}u = -\sum_{j=1}^N \partial_j\left( \vert\partial_j u\vert^{p_j - 2}\partial_j u \right),
\]
and the lower-order term \(\Phi\) is given by
\[
\Phi(x,u,\nabla u) = \left( 1 + \sum_{j=1}^N \mathfrak{a}_j \vert\partial_j u\vert^{p_j} \right)|u|^{m-2}u,
\]
with parameters \(m,p_j>1\), \(\mathfrak{a}_j \ge 0\), and \(\sum_{k=1}^N \frac{1}{p_k} > 1\).
A key novelty of the study is the inclusion of a possibly singular gradient-dependent nonlinearity
\[
\Psi(u,\nabla u) = \sum_{j=1}^N |u|^{\theta_j - 2}u , \vert\partial_j u\vert^{q_j},
\]
where \(\theta_j>0\) and \(0 \le q_j < p_j\).
The lower-order operator $\mathcal{L}$ belongs to the general class $\mathcal{BC}$ introduced in \cite{baci}. 
The class $\mathcal{BC}$ consists of \emph{bounded} operators
\[
\mathcal{L} : W_0^{1,(p_j)}(\Omega) \rightarrow W^{-1,(p_j')}(\Omega)
\]
that satisfy the following structural conditions:

\begin{itemize}
    \item[(P$_1$)]  
    The operator $\mathcal{A} - \mathcal{L}$ is coercive on $W_0^{1,(p_j)}(\Omega)$, namely,
    \[
    \frac{\langle \mathcal{A}u - \mathcal{L}u,\, u \rangle}{\|u\|_{W_0^{1,(p_j)}(\Omega)}} \to \infty 
    \qquad \text{as } \|u\|_{W_0^{1,(p_j)}(\Omega)} \to \infty .
    \]

    \item[(P$_2$)]  
    If $u_l \rightharpoonup u$ and $v_l \rightharpoonup v$ weakly in $W_0^{1,(p_j)}(\Omega)$, then
    \[
    \lim_{l \to \infty} \langle \mathcal{L}u_l,\, v_l \rangle
    = \langle \mathcal{L}u,\, v \rangle .
    \]
\end{itemize}

These assumptions allow the authors to include a broad family of lower-order terms while preserving the coercivity and compactness properties needed for their existence theory.

The authors obtain existence results under two different regimes:
\begin{enumerate}
\item when \(\theta_j > 1\) for every \(j = 1,\dots, N\);

\item when \(\theta_j \le 1\) for at least one index \(j\).
\end{enumerate}
   In the latter case, by assuming \(f \ge 0\) almost everywhere in \(\Omega\), they further prove the existence of nonnegative weak solutions.

%
 In our study, we face three main difficulties. The first comes from the anisotropic structure of the operator, which requires estimates adapted to different growth conditions in each direction. The second difficulty is the presence of the singular lower-order term depending on the gradient $\frac{\vert D_{j} u\vert^{p_{j}}}{\vert u\vert^{\theta}}$, which complicates the analysis, especially near the points where the unknown $u$ may approach zero. The third difficulty is related to the additional term 
 $$
\displaystyle-\sum\limits_{j\in J} D_{j}\left(u^{q}\vert D_{j}u\vert^{p_{j}-2} D_{j}u\right)$$ 
whose nonlinear form interacts with the previous terms and makes the compactness arguments more delicate.
\begin{definition}
\label{Definition3.1}
Let the condition in \eqref{(3)} hold. A function $u\in W_{0}^{1,1}(\Omega)$ is a distributional solution to problem \eqref{(1)} if
\begin{align}
\label{(13)}
& u>0\quad\quad \mbox{ a.e. in}\; \Omega,\\
\label{(14)}
& \sum\limits_{j\in J}\left[a(x)+u^{q}\right]\vert D_{j}u\vert^{p_{j}-2}D_{j}u\in (L^{\sigma_{j}}(\Omega))^N,\quad \mbox{for all}\; \sigma_{j}<p_{j}',\\
\label{(15)}
& \sum\limits_{j\in J} b(x)\frac{\vert D_{j}u\vert^{p_{j}}}{u^{\theta}}\in L^{1}(\Omega),
\end{align}
and for all  $\Phi \in W_0^{1,d_j}(\Omega),\; d_j>p_j$
\begin{align}
\label{(16)}
&\displaystyle\sum\limits_{j\in J}\int_{\Omega}\left[a(x)+u^{q}\right]\vert D_{j}u\vert^{p_{j}-2}D_{j}u D_{j}\Phi +\sum\limits_{j\in J}\int_{\Omega} b(x)\frac{\vert D_{j}u\vert^{p_{j}}}{u^{\theta}}\Phi =\int_{\Omega}f\Phi\,dx .
\end{align}
\end{definition}
\par Our main results are as follows
\begin{theorem}
\label{T2}
Assume \eqref{(2)}-\eqref{(6)} hold. Then, 
\begin{itemize}
\item[(i)] If $0<q<1-\theta$, there exists a distributional solution $u\in W_{0}^{1,(r_{j})}(\Omega)$ to problem \eqref{(1)}, such that 
\begin{align}
\label{(17)}
r_{j}=\frac{N(\overline{p}-\theta)}{\overline{p}(N-\theta)}p_{j},\quad \forall \; j\in J.
\end{align}
\item[(ii)] If $1-\theta<q\leq1$, there exists a distributional solution $u\in W_{0}^{1,(\eta_{j})}(\Omega)$ to problem \eqref{(1)}, such that 
\begin{align}
\label{(017)}
r_{j}<\frac{N(\overline{p}-1+q)}{\overline{p}(N+q-1)}p_{j},\quad \forall \; j\in J.
\end{align}
\item[(iii)] If $q>1$, there exists a distributional solution $u\in W_{0}^{1,(p_{j})}(\Omega)$ to problem \eqref{(1)}.
\end{itemize}
\end{theorem}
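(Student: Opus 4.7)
\medskip
\noindent\textbf{Plan of proof.} My approach is the standard approximation--estimate--passage-to-the-limit scheme, carried out separately in the three regimes for $q$. For each integer $n\ge 1$, set $f_n=\min(f,n)$ and regularise the singular denominator by replacing $u^{\theta}$ with $(u+\tfrac{1}{n})^{\theta}$, so the approximate equation
\[
-\sum_{j\in J} D_{j}\Big(\big[a(x)+u_{n}^{q}\big]|D_{j}u_{n}|^{p_{j}-2}D_{j}u_{n}\Big)
+b(x)\sum_{j\in J}\frac{|D_{j}u_{n}|^{p_{j}}}{(u_{n}+\tfrac{1}{n})^{\theta}}=f_{n}
\]
has bounded and nondegenerate coefficients and $L^{\infty}$ data. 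Leray--Lions' theorem for pseudomonotone coercive operators on the reflexive space $W_{0}^{1,(p_{j})}(\Omega)$ then yields a nonnegative bounded solution $u_{n}\in W_{0}^{1,(p_{j})}(\Omega)\cap L^{\infty}(\Omega)$. Using $-(u_{1}-u_{n})^{+}$ as a test function shows that $\{u_{n}\}$ is monotone and $u_{n}\ge u_{1}$, and the strong maximum principle applied to $u_{1}$ furnishes a locally uniform lower bound $u_{n}\ge c_{\omega}>0$ on every $\omega\Subset\Omega$.

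\medskip
The heart of the argument is the derivation of uniform a priori estimates. Testing the approximate equation with $u_{n}$ and using \eqref{(5)}--\eqref{(6)} produces, schematically,
\[
\alpha\sum_{j\in J}\!\int_{\Omega}|D_{j}u_{n}|^{p_{j}}
+\sum_{j\in J}\!\int_{\Omega} u_{n}^{q}|D_{j}u_{n}|^{p_{j}}
+\mu\sum_{j\in J}\!\int_{\Omega}\frac{u_{n}|D_{j}u_{n}|^{p_{j}}}{(u_{n}+\tfrac{1}{n})^{\theta}}
\le\int_{\Omega}f\,u_{n}+C.
\]
For case (iii) ($q>1$), the term $u_{n}^{q}|D_{j}u_{n}|^{p_{j}}$ coincides (up to a constant) with $|D_{j}(u_{n}^{(q+p_{j})/p_{j}})|^{p_{j}}$, so combined with the anisotropic Sobolev embedding $W_{0}^{1,(p_{j})}(\Omega)\hookrightarrow L^{\overline{p}^{\,\ast}}(\Omega)$ this closes the estimate directly in $W_{0}^{1,(p_{j})}(\Omega)$. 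For cases (i) and (ii) I adapt the Boccardo--Gallou\"et trick to the anisotropic setting: for $r_{j}<p_{j}$ and $\gamma>0$ to be chosen,
\[
\int_{\Omega}|D_{j}u_{n}|^{r_{j}}
\le\Big(\!\int_{\Omega}\!\frac{|D_{j}u_{n}|^{p_{j}}}{(1+u_{n})^{\gamma}}\Big)^{\!r_{j}/p_{j}}
\Big(\!\int_{\Omega}\!(1+u_{n})^{\gamma r_{j}/(p_{j}-r_{j})}\Big)^{\!1-r_{j}/p_{j}},
\]
where the first factor is controlled by testing with a power of $(1+u_n)$ and the second is bounded via the anisotropic embedding $W_{0}^{1,(r_{j})}(\Omega)\hookrightarrow L^{\overline{r}^{\,\ast}}(\Omega)$. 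Balancing the two exponents with $\gamma=\theta$ yields the threshold \eqref{(17)} in case (i), and with $\gamma=1-q$ the threshold \eqref{(017)} in case (ii).

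\medskip
With uniform estimates in hand, the monotone convergence theorem provides a limit $u$ with $u_{n}\nearrow u$ a.e.\ in $\Omega$ and $u_{n}\rightharpoonup u$ weakly in $W_{0}^{1,(r_{j})}(\Omega)$ (with $r_{j}$ corresponding to the current case). Almost everywhere convergence $D_{j}u_{n}\to D_{j}u$ is obtained via a Boccardo--Murat-type argument for monotone operators with $L^{1}$ data, testing with $T_{k}(u_{n}-u)$ and exploiting the sign of the singular absorption. The locally uniform bound $u_{n}\ge c_{\omega}>0$ on $\omega\Subset\Omega$ makes the regularised factor $(u_{n}+\tfrac{1}{n})^{-\theta}$ harmless on compact subsets, allowing Vitali/Fatou to pass to the limit in the singular term and to recover \eqref{(15)}. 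The integrability \eqref{(14)} of the flux is then read off from the uniform bound on $u_{n}^{q}|D_{j}u_{n}|^{p_{j}-1}$ in a Marcinkiewicz space combined with H\"older's inequality, and density enlarges the class of admissible test functions in \eqref{(16)} to $W_{0}^{1,d_{j}}(\Omega)$ with $d_{j}>p_{j}$.

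\medskip
I expect the main obstacle to be the almost-everywhere convergence of the partial derivatives: the variable coefficient $u^{q}$ inside the principal part and the quadratic-type singular absorption interact in a way that is not covered by the classical Boccardo--Murat argument, and the gradient estimates depend in a delicate way on the balance between $q$ and $\theta$. The plan to overcome this is to localise on $\{u_{n}\ge c_{\omega}\}\cap\omega$, where the regularised absorption is uniformly integrable and the coefficient $a(x)+u_{n}^{q}$ is bounded away from zero, apply the standard monotonicity trick there, and then extend to all of $\Omega$ by a diagonal exhaustion of $\omega\Subset\Omega$.
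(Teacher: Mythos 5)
The meta-strategy (approximate, estimate, pass to the limit) matches the paper's, but several of the crucial steps in your plan either would not work as stated or are significantly underspecified compared to what the problem actually requires.

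\medskip
\emph{A priori estimates.} Testing the approximate equation with $u_{n}$ produces $\int_{\Omega}f\,u_{n}$ on the right-hand side; with only $f\in L^{1}(\Omega)$ this quantity is not controlled a priori, so the estimate is circular. The paper avoids this by using \emph{bounded} test functions: $\mathcal{T}_{k}(u_{n})/k$ to estimate the absorption term and the truncated gradients, and $1-(1+u_{n})^{1-\lambda}$ (with $\lambda>1$ tuned to the regime) to close the gradient estimate. In particular, your case (iii) argument fails because you never get a uniform $W_{0}^{1,(p_{j})}$-bound from the inequality you wrote down; the paper instead tests with $1-(1+u_{n})^{1-q}$, whose modulus is bounded by $1$, giving $\|f\|_{L^{1}}$ on the right directly. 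Relatedly, your claimed choice $\gamma=1-q$ in case (ii) sits exactly at the excluded endpoint $\lambda=1$; the admissible range is $\gamma=\lambda-q>1-q$, which is precisely why \eqref{(017)} is a strict inequality.

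\medskip
\emph{Positivity and monotonicity.} The comparison $u_{n}\ge u_{1}$ is not obvious here: in the regularization $(u_{n}+\tfrac1n)^{-\theta}$ (or the paper's $u_{n}(u_{n}+\tfrac1n)^{-\theta-1}$), the absorption term \emph{increases} with $n$, which pushes against monotonicity even though $f_{n}$ increases; a straightforward subsolution comparison with $u_{1}$ does not close. The paper does not assert monotonicity at all. It proves $u>0$ directly for the limit function by an exponential change of unknown $\mathcal{V}(\mathcal{T}_{1}(u))$, combined with the anisotropic weak Harnack inequality (Theorem~\ref{T1} and Corollary~\ref{C1}) applied to the resulting supersolution; your proposed local lower bound $u_{n}\ge c_{\omega}>0$ would need an argument of this type, not just the strong maximum principle for $u_{1}$.

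\medskip
\emph{Passage to the limit.} This is the most serious gap. Vitali and Fatou give only one-sided inequalities, and they do not by themselves handle the product $u_{n}^{q}\lvert D_{j}u_{n}\rvert^{p_{j}-1}D_{j}\psi$ near infinity (the flux is only in $L^{\sigma_{j}}$ for $\sigma_{j}<p_{j}'$, not $L^{p_{j}'}$, so one cannot pair it against an arbitrary $D_{j}\psi\in L^{p_{j}}$ and pass to the limit). The paper resolves this with a two-step inequality argument: first with $\varphi=\psi\,\mathcal{B}(u_{n}/k)$ for $\psi\ge0$, then with $\varphi=\psi\,e^{-\gamma\mathcal{H}_{n}(u_{n})}\mathcal{B}(u_{n}/k)$ for $\psi\le0$, where $\mathcal{B}$ is a compactly supported cut-off and the exponential factor is chosen to absorb the natural-growth absorption term before $n\to\infty$; combining the two and then sending $k\to\infty$ (with the carefully matched scale $\sigma(k)$ controlled by $\mathcal{Q}(k)$) recovers the equality in \eqref{(16)}. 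Your plan does not contain this mechanism, and without it the limit identity cannot be derived from the a.e.\ convergences alone. Similarly, the flux integrability \eqref{(14)} is obtained in the paper from the weighted anisotropic Sobolev inequality of Lemma~\ref{L2} after the test function $1-(1+u_{n})^{-\lambda}$; the shorthand ``Marcinkiewicz plus H\"older'' does not capture how the product $u_{n}^{q}\lvert D_{j}u_{n}\rvert^{p_{j}-1}$ is actually bounded in $L^{\sigma_{j}}(\Omega)$ for every $\sigma_{j}<p_{j}'$.
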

We now give a brief description of the proof of our main theorem. The first step consists in removing the degeneracy of the principal operator and the singularity of the lower-order gradient term. For this purpose, we consider a sequence of approximate problems associated with \eqref{(1)}, where both difficulties are regularized. In the second step, we establish suitable a priori estimates for the approximate solutions. The main challenge here is to control the term
$$-\sum_{j \in J} D_{j}\left(u^{q}\vert D_{j}u\vert^{p_{j}-2}D_{j}u\right)$$
in an appropriate anisotropic Lebesgue space. Finally, by using Fatou’s lemma together with a standard compactness argument, we pass to the limit in the approximate problems and obtain a weak solution to the original equation.
\begin{remark}
We collect here several observations related to the assumptions of Theorem~\ref{T2}.
  
We first note that hypothesis \eqref{(4)} already ensures that  
\[
1 < r_{j} \le p_{j} \qquad \text{for every } j \in J.
\]  
Moreover, by combining assumptions \eqref{(2)} and \eqref{(4)}, we obtain the stronger condition  
\[
r_{j} > p_{j} - 1, \qquad \forall\, j \in J.
\]
  
We also observe that the condition \(1 - \theta < q\) appearing in Theorem~\ref{T2}(ii) is equivalent to  
\[
\frac{N(\overline{p}-\theta)}{\overline{p}(N-\theta)} p_{j}  
\;<\;
\frac{N(\overline{p}-1+q)}{\overline{p}(N+q-1)} p_{j},
\qquad \forall\, j \in J.
\]  
This relation will be useful when comparing the integrability exponents arising in the proof.
  
Furthermore, the hypothesis \(q \le 1\) in Theorem~\ref{T2}(ii) automatically implies that  
\[
1 < \eta_{j} < p_{j},
\]  
which plays an important role in establishing the required estimates.
 
It is also worth mentioning that, in the isotropic case \(p_{j} = 2\) for all \(j \in J\), Theorem~\ref{T2} reduces to known regularity results for classical elliptic equations. In particular, our conclusions coincide with those of Theorem~1.1 in \cite{R0}.
  
Finally, by applying the Sobolev embedding theorem, we can describe the integrability of the solutions.  
For Theorem~\ref{T2}(i), the solution \(u\) also satisfies  
\[
u \in L^{\overline{r}^{*}}(\Omega),
\qquad 
\overline{r}^{*} = \frac{N(\overline{p}-\theta)}{N-\overline{p}}.
\]  
For Theorem~\ref{T2}(ii), the solution enjoys the additional property  
\[
u \in L^{s}(\Omega),
\qquad \text{for every }\; s < \frac{N(\overline{p}-1+q)}{\,N-\overline{p}\,}.
\]
\end{remark}
For ease of reading, Section 2 begins with a brief recall of the main analytical tools of anisotropic Sobolev spaces and the definition of the approximate problem. We then state the essential a priori estimates Lemmas as well as some convergences results, which form the main step on the proof, and finally we pass to the limit.
\section{Preliminaries}
Before proceeding on the proof  we'll refresh the reader's memory on some previously established lemmas  on the functional setting in which we look for solutions to our main problem \eqref{(1)}.
\begin{lem}\cite{R10}
\label{L1}
There exists a positive constant $\mathcal{C}_1$ that depends only on $\Omega$, such that for $v\in W_{0}^{1,(p_{j})}(\Omega)$, $\overline{p}<N$ we have
\begin{align}
\label{(7)}
&\displaystyle\left\Vert v\right\Vert_{L^{\delta}(\Omega)}\leq \mathcal{C}_1\prod_{j\in J}\Vert D_{j}v\Vert_{L^{p_{j}}(\Omega)}^{\frac{1}{N}}, \quad \forall \delta\in[1,\overline{p}^{*}], \\
\label{(8)}
&\displaystyle\left\Vert v\right\Vert_{L^{\overline{p}^{*}}(\Omega)}^{p_{N}}\leq \mathcal{C}_1 \sum_{j\in J}\Vert D_{j}v\Vert_{L^{p_{j}}(\Omega)}^{p_{j}},
\end{align}
where $\displaystyle\overline{p}^{*}=\frac{N\overline{p}}{N-\overline{p}}$, $\displaystyle\frac{1}{\overline{p}}=\frac{1}{N}\sum_{j\in J}\frac{1}{p_{j}}$.
\end{lem}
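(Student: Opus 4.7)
The plan is a density reduction to smooth, compactly supported functions combined with the classical Troisi bootstrap. Since $W_{0}^{1,(p_{j})}(\Omega)$ is by definition the closure of $C_{0}^{\infty}(\Omega)$ in the anisotropic norm $\sum_{j}\|D_{j}v\|_{L^{p_{j}}}$, and both sides of \eqref{(7)} and \eqref{(8)} are continuous along approximating sequences (the left-hand sides through Fatou's lemma, the right-hand sides by direct continuity), it is enough to prove the inequalities for $v\in C_{0}^{\infty}(\Omega)$, which I extend by zero to $\mathbb{R}^{N}$. The problem thus reduces to an anisotropic Sobolev inequality on all of $\mathbb{R}^{N}$, where translation invariance and product structure are available.

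The cornerstone is Gagliardo's endpoint estimate
$$
\|v\|_{L^{N/(N-1)}(\mathbb{R}^{N})}\le \prod_{j=1}^{N}\|D_{j}v\|_{L^{1}(\mathbb{R}^{N})}^{1/N},
$$
which follows from the elementary pointwise bound $|v(x)|\le \int_{\mathbb{R}}|D_{j}v|\,dt_{j}$ combined with the iterated Loomis--Whitney / H\"older argument. This yields \eqref{(7)} in the degenerate case $p_{j}=1$, hence $\overline{p}=1$ and $\overline{p}^{*}=N/(N-1)$.

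To reach general exponents, I would apply the above to $|v|^{\gamma-1}v$ in place of $v$, with $\gamma>1$ a free parameter. The chain rule gives $D_{j}(|v|^{\gamma-1}v)=\gamma|v|^{\gamma-1}D_{j}v$, so H\"older's inequality with the conjugate pair $(p_{j},p_{j}')$ yields
$$
\|v\|_{L^{\gamma N/(N-1)}}^{\gamma}\le \gamma\prod_{j=1}^{N}\|v\|_{L^{(\gamma-1)p_{j}'}}^{(\gamma-1)/N}\|D_{j}v\|_{L^{p_{j}}}^{1/N}.
$$
The algebraic heart of the argument is then to select $\gamma$ so that the powers of $\|v\|$ on the right consolidate into a single norm equal to that on the left, which can then be absorbed. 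Using the harmonic-mean identity $N/\overline{p}=\sum_{j}1/p_{j}$, the natural choice is $\gamma$ such that $\gamma N/(N-1)=\overline{p}^{*}$; an additional H\"older step in the counting measure on $J$ collapses the mixed $\|v\|_{L^{(\gamma-1)p_{j}'}}$ factors into $\|v\|_{L^{\overline{p}^{*}}}^{\gamma-1}$, giving \eqref{(7)} at the endpoint $\delta=\overline{p}^{*}$. The subrange $1\le\delta<\overline{p}^{*}$ then follows from H\"older's inequality on the bounded set $\Omega$.

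Finally, to derive \eqref{(8)}, I would raise \eqref{(7)} (with $\delta=\overline{p}^{*}$) to the power $p_{N}$ and apply weighted Young's inequality to the product $\prod_{j}\|D_{j}v\|_{L^{p_{j}}}^{p_{N}/N}$, distributing weights so that each factor is traded for $\|D_{j}v\|_{L^{p_{j}}}^{p_{j}}$ (up to an additive constant), which is legitimate because $p_{j}\ge 2$ and $\overline{p}<N$ guarantee the Young exponents are admissible. The main obstacle I anticipate is the bootstrap step: verifying that the iterated H\"older exponents in the passage from the $L^{1}$-Gagliardo inequality to the $L^{\overline{p}^{*}}$ inequality are compatible, and that the harmonic-mean identity for $\overline{p}$ is exactly what is needed for the $v$-dependent factors to collapse into a single norm that can be absorbed.
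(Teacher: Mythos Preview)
The paper does not supply its own proof of this lemma; it is quoted verbatim from Troisi~\cite{R10}. Your sketch follows the classical Gagliardo--Troisi strategy that the cited reference contains, so in that sense the approaches coincide.

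There is, however, a genuine gap at the step you yourself flag as the main obstacle. Applying Gagliardo's inequality to the single power $|v|^{\gamma}$ with $\gamma N/(N-1)=\overline{p}^{*}$ and then using H\"older with $(p_{j},p_{j}')$ in each direction produces factors $\|v\|_{L^{(\gamma-1)p_{j}'}}$ whose exponents are genuinely different: one computes $(\gamma-1)p_{j}'=\dfrac{N(\overline{p}-1)}{N-\overline{p}}\cdot\dfrac{p_{j}}{p_{j}-1}$, which exceeds $\overline{p}^{*}$ exactly when $p_{j}<\overline{p}$. For those indices no bounded-domain H\"older inequality gives $\|v\|_{L^{(\gamma-1)p_{j}'}}\le C\|v\|_{L^{\overline{p}^{*}}}$, and log-convexity of $L^{p}$ norms yields $\prod_{j}\|v\|_{L^{(\gamma-1)p_{j}'}}^{1/N}\ge\|v\|_{L^{\overline{p}^{*}}}$ --- the wrong direction for absorption. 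Your ``H\"older in the counting measure'' cannot collapse the product as claimed. Troisi's actual argument avoids this by using \emph{direction-dependent} powers $s_{j}=1+\overline{p}^{*}/p_{j}'$: one applies the fundamental theorem to $|v|^{s_{j}}$ and performs the H\"older split with $(p_{j},p_{j}')$ \emph{inside} the iterated Loomis--Whitney integration, so that every auxiliary $v$-integral is exactly $\int|v|^{\overline{p}^{*}}$; the harmonic-mean identity then gives $\sum_{j}s_{j}/(N-1)=\overline{p}^{*}$ and absorption follows from $\overline{p}<N$. A related miscount affects your passage to \eqref{(8)}: the Young exponents $q_{j}=Np_{j}/p_{N}$ you need satisfy $\sum_{j}1/q_{j}=p_{N}/\overline{p}$, which equals $1$ only when all $p_{j}$ coincide; the Young step closes cleanly with exponent $\overline{p}$ on the left, and reaching $p_{N}$ requires a further argument (or the additional hypothesis $p_{N}\le\overline{p}^{*}$).
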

\begin{lem}\cite{R10}
\label{L2}
There exists a positive constant $\mathcal{C}_2$, depending only on $\Omega$, such that for $v\in W_{0}^{1,(p_{j})}(\Omega)\cap L^{\infty}(\Omega)$, $\overline{p}<N$. we have
\begin{equation}
\label{(9)}
\displaystyle\left(\int_{\Omega}\vert v\vert^{z}dx\right)^{\frac{N}{\overline{p}}-1}\leq \mathcal{C}_2\prod_{j\in J}\left(\int_{\Omega}
\vert D_{j}v\vert^{p_{j}}\vert v\vert^{t_{j}p_{j}}dx\right)^{\frac{1}{p_{j}}},
\end{equation}
for every $z$ and $t_{j}\geq 0$ selected in such a way that
\begin{equation*}
\begin{cases}
\displaystyle\frac{1}{z}=\frac{\gamma_{j}(N-1)-1+\frac{1}{p_{j}}}{t_{j}+1},\\
\displaystyle\sum_{j\in J}\gamma_{j}=1.
\end{cases}
\end{equation*}
\end{lem}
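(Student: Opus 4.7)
My plan is to prove the weighted anisotropic inequality of Lemma~\ref{L2} by an iterated one-dimensional slicing argument glued together via Gagliardo's $(N-1)$-fold Hölder-type lemma, with the exponents $\alpha_{j}$ that will appear chosen so that the constraint linking $z$, $t_{j}$ and $\gamma_{j}$ emerges automatically. Throughout I extend $v$ by zero outside $\Omega$ so that $v\in W^{1,(p_{j})}(\mathbb{R}^{N})\cap L^{\infty}(\mathbb{R}^{N})$; the $L^{\infty}$ assumption is precisely what is needed to ensure that the intermediate quantities $|v|^{\alpha_{j}}$ lie in $W^{1,1}$ along almost every line so that the chain rule is available.

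For each $j\in J$ and a.e.\ $x$, writing $x^{(j)}=(x_{1},\dots,x_{j-1},x_{j+1},\dots,x_{N})$ and applying the fundamental theorem of calculus to $|v|^{\alpha_{j}}$ in the $x_{j}$-direction, followed by one-dimensional Hölder's inequality with conjugate pair $(p_{j},p_{j}')$, I obtain
\begin{equation*}
|v(x)|^{\alpha_{j}}\;\le\;\alpha_{j}\int_{\mathbb{R}}|v|^{\alpha_{j}-1}|D_{j}v|\,ds_{j}\;\le\;\alpha_{j}\left(\int_{\mathbb{R}}|v|^{t_{j}p_{j}}|D_{j}v|^{p_{j}}ds_{j}\right)^{1/p_{j}}\left(\int_{\mathbb{R}}|v|^{(\alpha_{j}-1-t_{j})p_{j}'}ds_{j}\right)^{1/p_{j}'},
\end{equation*}
which depends only on $x^{(j)}$. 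Setting $\alpha_{j}=z\gamma_{j}(N-1)$, raising the $j$-th inequality to the power $1/(N-1)$ and multiplying over $j$, the left-hand sides combine (using $\sum_{j}\gamma_{j}=1$) into $|v(x)|^{z}$.

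Since each factor on the right of the product depends only on $x^{(j)}$, Gagliardo's lemma followed by one more Hölder's inequality in $\mathbb{R}^{N-1}$ (to separate the two factors per direction) produces
\begin{equation*}
\int_{\mathbb{R}^{N}}|v|^{z}dx\;\le\;C\prod_{j\in J}\left(\int_{\mathbb{R}^{N}}|v|^{t_{j}p_{j}}|D_{j}v|^{p_{j}}dx\right)^{\!1/(p_{j}(N-1))}\left(\int_{\mathbb{R}^{N}}|v|^{(\alpha_{j}-1-t_{j})p_{j}'}dx\right)^{\!1/(p_{j}'(N-1))}.
\end{equation*}
The closure step is to impose $(\alpha_{j}-1-t_{j})p_{j}'=z$; combined with $\alpha_{j}=z\gamma_{j}(N-1)$ this rearranges into precisely $1/z=(\gamma_{j}(N-1)-1+1/p_{j})/(t_{j}+1)$, which is the hypothesis of the statement. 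With this choice, the second family of factors equals a power of $\int|v|^{z}$ whose total exponent on the right is $\sum_{j}1/(p_{j}'(N-1))=(N-N/\overline{p})/(N-1)$; moving it to the left gives an exponent $(N/\overline{p}-1)/(N-1)$. Raising to the power $N-1$ collapses each $1/(p_{j}(N-1))$ into $1/p_{j}$ and yields exactly \eqref{(9)}.

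The essential obstacle is not any single analytical step but the bookkeeping of the four families of exponents $(\alpha_{j},t_{j},\gamma_{j},z)$: one must verify that the two choices $\alpha_{j}=z\gamma_{j}(N-1)$ and $(\alpha_{j}-1-t_{j})p_{j}'=z$ are simultaneously compatible with $\sum_{j}\gamma_{j}=1$, and the computation above confirms that this compatibility is equivalent to the relation given in the statement. The boundedness of $v$ enters only to justify a priori finiteness of $\int|v|^{z}$ and $\int|v|^{\alpha_{j}-1-t_{j}}$ before the absorption; if desired, one can drop this assumption afterwards by approximating $v$ by its truncations $T_{k}v$ and applying Fatou's lemma.
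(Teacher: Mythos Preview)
The paper does not supply a proof of this lemma: it is stated as a known result and attributed to Troisi \cite{R10}. Your argument is correct and is in fact the classical Troisi--Gagliardo slicing proof: the one-dimensional fundamental theorem of calculus applied to $|v|^{\alpha_{j}}$, Hölder in the $j$-th variable, the $(N-1)$-fold Gagliardo product lemma, and then the closure $(\alpha_{j}-1-t_{j})p_{j}'=z$ to absorb the residual $|v|$-integrals. The exponent bookkeeping you carry out is exactly the computation that produces the constraint $1/z=(\gamma_{j}(N-1)-1+1/p_{j})/(t_{j}+1)$, and your final power count $\sum_{j}1/(p_{j}'(N-1))=(N-N/\overline{p})/(N-1)$ is correct. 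One small point worth making explicit: your choice forces $\alpha_{j}=1+t_{j}+z/p_{j}'\ge1$, so the chain rule for $|v|^{\alpha_{j}}$ is indeed legitimate; this is implicit in your remark about $L^{\infty}$ but deserves a line.
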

\par To prove that the solution $u$ in $\Omega$ is positive, we need the theorem, which we will discuss later. As a preliminary to this theorem, we take into account the following  problem
\begin{align}
\label{(10)}
\left\{\begin{array}{ll}-\displaystyle\sum\limits_{j\in J} D_{j}\left[\vert D_{j}u\vert^{p_{j}-2}D_{j}u\right]=\lambda \vert u\vert^{q-2}u& \hbox{in}\;\;\Omega, \\
 u  =0 & \hbox{on}\;\; \partial\Omega, \end{array}
 \right.
\end{align}
here $\lambda>0$ and $p_{1}<q<p_{N}$.
\par We define weak supersolutions for the given problem \eqref{(10)}; for further details, see \cite{R3}.
\begin{definition}
\label{D1}
A function $u\geq 0$, a.e. in $\Omega$, $u\in W_{0}^{1,(p_{j})}(\Omega)$, is a positive weak supersolution for \eqref{(10)} if it obeys the inequality
\begin{equation}
\label{(11)}
\displaystyle\sum_{j\in J}\int_{\Omega} \vert D_{j}u\vert^{p_{j}-2}D_{j}u D_{j}\phi \geq 0, \quad \forall \phi \in C_{0}^{\infty}(\Omega),\;\; \phi \geq 0.
\end{equation}
\end{definition}
The following weak Harnack inequality for weak super-solutions is the key result.
The estimate is of local type. For simplicity, we will work with cubes. In what follows, $K_{x_{0}}(\rho)$ denotes the cube in $\mathbb{R}$ with side length \(\rho\) and center \(x_0\), whose edges are parallel to the coordinate axes. When the center is understood, we will abbreviate the notation and write \(K(\rho)\).
\begin{theorem}{\cite{R3,R4}}
\label{T1}
Let $u$ be a weak non-negative super-solution of \eqref{(10)}, with $u< M$ in $\Omega$, and assume $p_{1}\geq2$. Then, there exists a positive constant $\mathcal{C}_3$ such that we have 
\begin{equation}
\label{(12)}
\displaystyle\rho^{-\frac{N}{\beta}}\Vert u\Vert_{L^{\beta}(K(2\rho))}\leq C \min\limits_{K(\rho)}u,
\end{equation}
 for $\displaystyle\beta<\frac{N(p_{1}-1)}{N-p_{1}}$, if $p_{1}\leq N$, for any $\beta$, if $p_{1}>N$.
\end{theorem}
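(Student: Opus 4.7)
The plan is to adapt the Moser iteration scheme, classically developed for isotropic $p$-Laplace equations, to the anisotropic framework available here. First, I would test the supersolution inequality \eqref{(11)} against a family of functions of the form $\phi=(u+\varepsilon)^{-\alpha}\xi^{p^{+}}$, where $\varepsilon,\alpha>0$ and $\xi\in C_0^\infty(K(2\rho))$ is a nonnegative cutoff equal to $1$ on a smaller concentric cube. Since $0\le u<M$, the shift $u+\varepsilon$ is safely positive, so $\phi$ is admissible. Expanding $D_j\phi$, using $p_1\ge 2$ to absorb the cross terms via Young's inequality direction-by-direction, and sending $\varepsilon\downarrow 0$ yields an anisotropic Caccioppoli-type bound of the schematic form
\begin{equation*}
\sum_{j\in J}\int |D_j u|^{p_j}\,u^{-\alpha-1}\,\xi^{p^{+}}\,dx
\;\lesssim\;
\sum_{j\in J}\int u^{p_j-\alpha-1}\,|D_j\xi|^{p_j}\,dx .
\end{equation*}

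Next I would feed this Caccioppoli estimate into the anisotropic Sobolev embedding of Lemma~\ref{L1}: rewriting the left-hand side as a product of $L^{p_j}$ norms of directional derivatives of a suitable negative power of $u$ allows one to promote the local integrability by the factor $\overline{p}^{*}/\overline{p}$. Iterating this gain along a sequence of shrinking cubes $K(\rho_k)\downarrow K(\rho)$ together with a sequence of negative exponents $-\alpha_k\to -\infty$, and controlling the product of iteration constants by a standard geometric-series argument, produces
\begin{equation*}
\min_{K(\rho)}u \;\gtrsim\; \rho^{N/\beta_{0}}\,\Vert u^{-1}\Vert_{L^{\beta_{0}}(K(2\rho))}^{-1}
\end{equation*}
for a suitably small $\beta_0>0$. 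To bridge negative and positive integrability scales, I would then take $\phi=\xi^{p^{+}}/(u+\varepsilon)$ as test function, which after standard manipulation yields a BMO-type control on $\log u$, and apply an anisotropic John--Nirenberg lemma (cf.\ \cite{R3}) to obtain a matching lower bound of $\Vert u^{-1}\Vert_{L^{\beta_0}(K(2\rho))}^{-1}$ by $\rho^{-N/\beta}\Vert u\Vert_{L^\beta(K(2\rho))}$ for every admissible $\beta$. Composing the two inequalities gives \eqref{(12)}.

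The main obstacle, compared with the isotropic case, lies in the multiplicative (rather than additive) structure of the anisotropic Sobolev inequality \eqref{(7)}: one must track $N$ distinct exponents $p_j$ simultaneously through the iteration and choose the interpolation parameters $\gamma_j, t_j$ of Lemma~\ref{L2} compatibly at every step, since a single incompatible choice destroys the gain. This coupling is what forces the restriction $\beta<N(p_1-1)/(N-p_1)$ when $p_1\le N$, as the smallest exponent $p_1$ governs the worst direction of the embedding; the unrestricted range when $p_1>N$ simply reflects the embedding $W_0^{1,(p_j)}(\Omega)\hookrightarrow L^\infty(\Omega)$ in that regime, so the Moser scheme terminates in a single step. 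The hypothesis $p_1\ge 2$ enters precisely by enabling the Young-inequality absorption with the natural exponent $p_j$, rather than the more delicate \emph{subquadratic} estimates that would otherwise be required.
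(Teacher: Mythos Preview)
The paper does not supply a proof of Theorem~\ref{T1}: the result is quoted from the references \cite{R3,R4} (Di Castro's thesis and Trudinger) and used as a black box, so there is no ``paper's own proof'' to compare against. Your outline follows the classical Moser iteration route---negative-power test functions to get a Caccioppoli inequality, the anisotropic Sobolev embedding (Lemmas~\ref{L1}--\ref{L2}) to gain integrability, iteration over shrinking cubes, and a $\log u$/John--Nirenberg bridge between negative and positive exponents---which is indeed the method used in \cite{R3} for the anisotropic case and in \cite{R4} for the isotropic prototype. In that sense your strategy matches the one behind the cited result.

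That said, your sketch is schematic at the two places where the anisotropic argument is genuinely delicate. First, in the Caccioppoli step the cross terms produced by $D_j\phi$ involve $|D_ju|^{p_j-1}u^{-\alpha}|D_j\xi|\xi^{p^+-1}$, and absorbing them direction-by-direction with Young's inequality requires tracking $N$ different exponents simultaneously; you state this can be done because $p_1\ge 2$, but the actual bookkeeping (choosing the $\gamma_j,t_j$ of Lemma~\ref{L2} compatibly at every iteration step) is the heart of Di Castro's argument and is only alluded to here. Second, the ``anisotropic John--Nirenberg lemma'' you invoke is not a standard off-the-shelf tool; in \cite{R3} the crossover from negative to positive powers is handled by a covering argument specific to cubes with axes-parallel faces, and one needs to check that the anisotropy does not spoil the doubling constants. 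Neither point is wrong, but both are places where ``standard manipulation'' hides nontrivial work that the cited references carry out in full.
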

\par The following is obvious from the previous Theorem.
\begin{Cor}{\cite{R3}}
\label{C1}
Assume that $p_{1}\geq2$ and let $u$ be a weak non-negative solution for \eqref{(10)}. Then $u$ is strictly positive in $\Omega$, or u is the trivial solution.
\end{Cor}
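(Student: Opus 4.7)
The plan is to derive the dichotomy from the weak Harnack inequality of Theorem~\ref{T1} combined with the connectedness of $\Omega$. Any weak non-negative solution of \eqref{(10)} is automatically a weak non-negative supersolution in the sense of Definition~\ref{D1} (equality implies inequality against non-negative test functions, after recalling that $p_{1}\geq 2$), so Theorem~\ref{T1} applies on every cube $K_{x_{0}}(2\rho)\subset\Omega$. The idea is then a standard propagation-of-positivity argument: either $u$ has positive $L^{\beta}$-mass on some cube and Harnack forces a positive minimum on a smaller cube, or $u$ vanishes identically there; and these two alternatives partition $\Omega$ into open sets.

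I would introduce the two subsets
\[
\Omega_{+}=\{x_{0}\in\Omega:\exists\rho>0,\ K_{x_{0}}(2\rho)\subset\Omega,\ \min_{K_{x_{0}}(\rho)}u>0\},
\]
\[
\Omega_{0}=\{x_{0}\in\Omega:\exists\rho>0,\ K_{x_{0}}(2\rho)\subset\Omega,\ u\equiv 0\ \text{a.e. on }K_{x_{0}}(2\rho)\}.
\]
For a given $x_{0}\in\Omega$, pick $\rho>0$ with $K_{x_{0}}(2\rho)\subset\Omega$; then either $\|u\|_{L^{\beta}(K_{x_{0}}(2\rho))}=0$, placing $x_{0}$ in $\Omega_{0}$, or $\|u\|_{L^{\beta}(K_{x_{0}}(2\rho))}>0$, and \eqref{(12)} gives $\min_{K_{x_{0}}(\rho)}u>0$, i.e.\ $x_{0}\in\Omega_{+}$. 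Thus $\Omega=\Omega_{+}\cup\Omega_{0}$, and by construction $\Omega_{+}\cap\Omega_{0}=\emptyset$. Both sets are open: $\Omega_{0}$ trivially, and $\Omega_{+}$ because whenever $\min_{K_{x_{0}}(\rho)}u>0$ we have $K_{y}(\rho/2)\subset K_{x_{0}}(\rho)$ for every $y$ sufficiently close to $x_{0}$, so $\min_{K_{y}(\rho/2)}u>0$ and $y\in\Omega_{+}$. Since $\Omega$ is connected, exactly one of the two sets equals $\Omega$: if $\Omega_{0}=\Omega$ then $u\equiv 0$ a.e.\ (the trivial solution), while if $\Omega_{+}=\Omega$ then $u$ is strictly positive everywhere in $\Omega$.

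The only subtle point in this plan is verifying the hypothesis $u<M$ required by Theorem~\ref{T1} on the chosen cube: one must either assume (as is implicit in this corollary) that $u$ is locally bounded, or invoke an auxiliary $L^{\infty}_{\mathrm{loc}}$-regularity result for weak supersolutions of \eqref{(10)}. Once this local boundedness is secured on every $K_{x_{0}}(2\rho)\Subset\Omega$, the rest of the argument reduces to the openness of $\Omega_{+}$ and the connectedness of $\Omega$, both of which are routine.
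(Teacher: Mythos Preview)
Your argument is correct and is precisely the standard way to make explicit what the paper leaves implicit: the paper gives no proof of Corollary~\ref{C1} at all, merely citing \cite{R3} and remarking that it ``is obvious from the previous Theorem'' (i.e.\ Theorem~\ref{T1}). Your propagation-of-positivity argument via the weak Harnack inequality and connectedness of $\Omega$ is exactly how one unpacks that remark, and your flagging of the local boundedness hypothesis $u<M$ is the right caveat---for weak \emph{solutions} of \eqref{(10)} this is supplied by the local $L^{\infty}$ theory in \cite{R3}, which is the reference the paper is already pointing to.
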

\par We shall employ the truncation $\mathcal{T}_{k}$, defined as follows if $k>0$
\begin{equation*}
\mathcal{T}_{k}(l)=\max (- k , \min ( l , k ))\quad \mbox{and}\quad \mathcal{G}_{k}(l)=l-\mathcal{T}_{k}(l).
\end{equation*}

\section{Proof of Theorem \ref{T2}}
\label{section3}
\subsection{Approximate problems}

In order to construct a distributional solution to problem \eqref{(1)}, we begin by
considering the following regularized Dirichlet problems: for every $n\in\mathbb{N}$,
\begin{equation}
\label{approx-problem}
\left\{
\begin{array}{ll}
\displaystyle
-\sum_{j\in J} D_{j}\left(\left[a(x)+(T_{n}(u_{n}))^{q}\right]
\vert D_{j}u_{n}\vert^{p_{j}-2}D_{j}u_{n}\right)&\\
\quad+
\displaystyle\sum_{j\in J} b(x)
\frac{u_{n}\vert D_{j}u_{n}\vert^{p_{j}}}{\left(\vert u_{n}\vert+\frac{1}{n}\right)^{\theta+1}}
= f_{n} & \text{in }\Omega,
\\
u_{n}=0 & \text{on }\partial\Omega,
\end{array}
\right.
\end{equation}
where $f_{n}=\mathcal{T}_{n}(f)$. Since $f\geq 0$ and $f\in L^{1}(\Omega)$, it follows that $f_{n}\in L^{\infty}(\Omega)$ and
$f_{n}\geq 0$.
The principal part of \eqref{approx-problem} is an anisotropic Leray--Lions
operator with measurable bounded coefficient. The remaining term
\begin{equation}
\label{lower-term}
B_{n}(u_{n})
=
\sum_{j\in J}
b(x)\frac{u_{n}\,\vert D_{j}u_{n}\vert^{p_{j}}}{\left(\vert u_{n}\vert+\frac{1}{n}\right)^{\theta+1}}
\end{equation}
is a lower--order perturbation having natural growth with respect to the
anisotropic gradient.
To apply Theorem~2.5 of A.\,Di Castro (\cite{R3}, PhD Thesis, Chapter~2),
we must verify that the term \eqref{lower-term} is pointwise dominated by a constant
multiple of $\vert D_{j}u_{n}\vert^{p_{j}}$.

Since $0<\mu\leq b(x)\leq \nu$, we estimate, for $l\geq0$,
\[
 b(x)\frac{l}{\left(l+\frac1n\right)^{\theta+1}}
=
b(x)\frac{n^{\theta+1}l}{\left(1+nl\right)^{\theta+1}}
\leq
\nu l.
\]
Therefore, we obtain, for each $j\in J$,
\begin{equation}
\label{growth-bound}
\left\vert
b(x)\frac{u_{n}\,\vert D_{j}u_{n}\vert^{p_{j}}}{\left(\vert u_{n}\vert+\frac{1}{n}\right)^{\theta+1}}
\right\vert
\leq
\nu u_n\,\vert D_{j}u_{n}\vert^{p_{j}}.
\end{equation}
This shows that the lower order term in \eqref{approx-problem}
has natural growth of order $p_{j}$ with respect to the anisotropic gradient,
precisely as required in Theorem~2.5 of A.\,Di Castro's thesis.
Moreover, the quantity in \eqref{lower-term} satisfies the sign condition
\[
b(x)\,\frac{u_{n}\,\vert D_{j}u_{n}\vert^{p_{j}}}{\left(\vert u_{n}\vert+\frac{1}{n}\right)^{\theta+1}}\,u_{n}
\geq 0.
\]
Which means that all the hypotheses of Theorem~2.5 in Di Castro's thesis are satisfied.
Hence, for each $n\in\mathbb{N}$, problem \eqref{approx-problem} admits a weak solution $u_{n}\in W^{1,p_{j}}_{0}(\Omega).$
%
%
Finally, by testing the equation with the negative part $u_{n}^{-}$,
using $f_{n}\geq0$ and the fact that lower order  term
is nonnegative, we obtain
\[
\sum_{j\in J}\int_{\Omega} \vert D_{j}u_{n}^{-}\vert^{p_{j}} = 0,
\]
and therefore $u_{n}^{-}=0$. Thus, $
u_{n}\geq0 \text{ in }\Omega$.
\par At end, equivalently, we stress that the solutions  $u_{n}$ of the problem \eqref{approx-problem} satisfies the following equations
\begin{equation}
\label{(21)}
\left\{\begin{array}{ll}\displaystyle-\sum\limits_{j\in J} D_{j}\left(\left[a(x)+u_{n}^{q}\right]\vert D_{j}u_{n}\vert^{p_{j}-2}D_{j}u_{n}\right)&\\
\quad\displaystyle+\sum\limits_{j\in J}b(x)\frac{u_{n}\vert D_{j}u_{n}\vert^{p_{j}}}{\left( u_{n}+\frac{1}{n}\right)^{\theta+1}}=f_{n}&\hbox{in}\;\Omega, \\
u_{n} =0 & \hbox{on}\; \partial\Omega, \end{array}
\right.
\end{equation}
in the sense that $u_{n}$ satisfies
\begin{align}
\label{(22)}
&\sum_{j\in J}\int_{\Omega}\left[a(x)+ u_{n}^{q}\right] \vert D_{j}u_{n}\vert^{p_{j}-2}D_{j}u_{n}D_{j}\varphi\,dx +\sum_{j\in J}\int_{\Omega}b(x)\frac{u_{n}\vert D_{j}u_{n}\vert^{p_{j}}}{\left(  u_{n}+\frac{1}{n}\right)^{\theta+1}}\varphi dx\nonumber\\
&\quad=\int_{\Omega}f_n\varphi dx, \quad \forall \varphi\in W_{0}^{1,(p_{j})}(\Omega)\cap L^{\infty}(\Omega).
\end{align}
\subsection{Uniform estimates }
$C$ will denote a constant (not dependent on $n$) that may vary from line to line in this subsection.
We proceed, now, by proving suitable a priori estimates for the singular and degenerate terms.
\begin{lem}
\label{lem1}
Assume that the assumptions  \eqref{(2)}-\eqref{(6)} hold true. Then, the sequence
$u_n$ satisfies the following estimates
\begin{align}
\label{(23)}
&\displaystyle\alpha\sum\limits_{j\in J}\int_{\Omega}\frac{u_{n}\vert D_{j} u_{n}\vert^{p_{j}}}{\left(u_{n}+\frac{1}{n}\right)^{\theta+1}}\leq \int_{\Omega} f ,\\
\label{(24)}
&\displaystyle\frac{1}{k}\sum_{j\in J}\int_{\Omega}\left[a(x)+u_{n}^{q}\right]\vert D_{j}\mathcal{T}_{k}(u_{n})\vert^{p_{j}}\leq \int_{\Omega}f\frac{\mathcal{T}_{k}(u_{n})}{k},
\end{align}
for every $n\in \mathbb{N}-\{0\}$, and for every $k>0$.
\end{lem}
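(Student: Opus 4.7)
The plan is to derive both estimates from a single test computation. I would test the weak formulation \eqref{(22)} with $\varphi = \mathcal{T}_k(u_n)/k$ for $k>0$, which belongs to $W_0^{1,(p_j)}(\Omega)\cap L^\infty(\Omega)$ since $u_n$ does, satisfies $0\leq \varphi\leq 1$, and has $D_j\varphi = \tfrac{1}{k} D_j u_n\,\chi_{\{u_n\leq k\}} = \tfrac{1}{k}D_j\mathcal{T}_k(u_n)$. After substitution, the principal part collapses cleanly via $|D_j u_n|^{p_j-2}D_j u_n\cdot D_j u_n\,\chi_{\{u_n\leq k\}} = |D_j\mathcal{T}_k(u_n)|^{p_j}$, yielding the identity
\begin{align*}
& \frac{1}{k}\sum_{j\in J}\int_\Omega [a(x)+u_n^q]\,|D_j\mathcal{T}_k(u_n)|^{p_j}\,dx \\
& \quad + \sum_{j\in J}\int_\Omega b(x)\frac{u_n\,|D_j u_n|^{p_j}}{(u_n+\tfrac{1}{n})^{\theta+1}}\,\frac{\mathcal{T}_k(u_n)}{k}\,dx = \int_\Omega f_n\,\frac{\mathcal{T}_k(u_n)}{k}\,dx.
\end{align*}
Both integrals on the left are manifestly nonnegative, while the right-hand side is bounded above by $\int_\Omega f$ through $\mathcal{T}_k(u_n)/k\leq 1$ and $f_n\leq f$.

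Estimate \eqref{(24)} then follows at once: I would discard the singular lower-order term and retain the right-hand side in the form $\int_\Omega f\,\mathcal{T}_k(u_n)/k$. No limit process is required, and the bound is obtained uniformly for every $k>0$.

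For estimate \eqref{(23)} I would instead discard the nonnegative principal part and pass to the limit as $k\to 0^+$. The family $\mathcal{T}_k(u_n)/k = \min(u_n/k,1)$ is monotone increasing as $k$ decreases and converges pointwise to $\chi_{\{u_n>0\}}$; since the singular integrand already carries the factor $u_n$, which annihilates the set $\{u_n=0\}$, the indicator becomes invisible in the limit. Beppo Levi's theorem therefore yields
$$\sum_{j\in J}\int_\Omega b(x)\frac{u_n\,|D_j u_n|^{p_j}}{(u_n+\tfrac{1}{n})^{\theta+1}}\,dx \;\leq\; \int_\Omega f\,dx,$$
and invoking the lower bound from \eqref{(6)} on $b(x)$ produces \eqref{(23)} with the stated positive constant on the left.

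The only step that is not purely algebraic is the passage $k\to 0^+$ for \eqref{(23)}, and I expect it to pose no real difficulty: the integrand factors as a $k$-independent nonnegative quantity multiplied by the monotone sequence $\mathcal{T}_k(u_n)/k$, which is exactly the hypothesis of the monotone convergence theorem. The substantive content of the lemma is thus essentially the choice of the right test function together with the separate exploitation of the sign of each term on the left-hand side.
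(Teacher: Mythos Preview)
Your proposal is correct and follows essentially the same approach as the paper: test with $\mathcal{T}_k(u_n)/k$, obtain the identity with two nonnegative terms on the left, drop one or the other, and for \eqref{(23)} send $k\to 0^+$. The only cosmetic difference is that the paper invokes Fatou's lemma for the limit $k\to 0$, whereas you use monotone convergence (Beppo Levi) after observing that $\mathcal{T}_k(u_n)/k$ increases as $k$ decreases; both are valid here.
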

\begin{proof}
Let us consider $\frac{\mathcal{T}_{k}(u_{n})}{k}$, $k>0$, as a test function in \eqref{(21)}. Using \eqref{(5)} and \eqref{(6)}, we obtain
\begin{align}
\label{(25)}
&\frac{1}{k}\sum_{j\in J}\int_{\Omega}\left[a(x)+u_{n}^{q}\right]\vert D_{j}\mathcal{T}_{k}(u_{n})\vert^{p_{j}}+ \alpha\sum_{j\in J}\int_{\Omega}\frac{u_{n}\vert D_{j} u_{n}\vert^{p_{j}}}{\left(u_{n}+\frac{1}{n}\right)^{\theta+1}}\frac{\mathcal{T}_{k}(u_{n})}{k}\nonumber\\
&\quad \leq\int_{\Omega} \mathcal{T}_{n}(f) \frac{\mathcal{T}_{k}(u_{n})}{k}.
\end{align}
Since $\mathcal{T}_{n}(f)\leq f$, $\frac{\mathcal{T}_{k}(u_{n})}{k}\leq1$, and by removing the operator term on the left, we have
\begin{align}
\label{(26)}
\alpha\sum_{j\in J}\int_{\Omega}\frac{u_{n}\vert D_{j} u_{n}\vert^{p_{j}}}{\left(u_{n}+\frac{1}{n}\right)^{\theta+1}}\frac{\mathcal{T}_{k}(u_{n})}{k}\leq\int_{\Omega} f.
\end{align}
By allowing $k$ to tend to $0$ in \eqref{(26)}, we can deduce \eqref{(23)} using Fatou's Lemma.
However, by dropping the nonnegative second term of \eqref{(25)}, we obtain
\begin{align}
\label{(27)}
\frac{1}{h}\sum_{j\in J}\int_{\Omega}\left[a(x)+u_{n}^{q}\right]\vert D_{j}\mathcal{T}_{k}(u_{n})\vert^{p_{j}} &\leq\int_{\Omega} \mathcal{T}_{n}(f) \frac{\mathcal{T}_{k}(u_{n})}{k}\nonumber\\
&\leq \int_{\Omega} f \frac{\mathcal{T}_{k}(u_{n})}{k},
\end{align}
hence, \eqref{(24)} holds true.
\end{proof}
Next, as a consequence of the previous lemma, we state the following lemma.
\begin{lem}
\label{L5}
Assume that the assumptions  \eqref{(2)}-\eqref{(6)} hold true. Then
\begin{itemize}
\item[(i)] If  $0<q\leq 1-\theta$, then, the sequence $u_{n}$ is uniformly bounded in $W_{0}^{1,(r_{j})}(\Omega)$, where $r_{j}$ define in \eqref{(17)} for all $j\in J$.
\item[(ii)] If $1-\theta<q\leq1$, then, the sequence $u_{n}$ is uniformly bounded in $W_{0}^{1,(r_{j})}(\Omega)$, with $\eta_{j}$ satisfies \eqref{(017)} for all $j\in J$.
\item[(iii)] If $q>1$, then, the sequence $u_{n}$ is uniformly bounded in $W_{0}^{1,(p_{j})}(\Omega)$ for all $j\in J$.
\end{itemize}
Moreover, the sequence $\{\mathcal{T}_{k}(u_{n})\}_{n}$ is uniformly bounded in  $W_{0}^{1,(p_{j})}(\Omega)$ for all $k>0$ and all $j\in J$. 
\end{lem}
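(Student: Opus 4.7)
The overall plan is to combine the two uniform estimates from Lemma~\ref{lem1} with the anisotropic Sobolev embeddings of Lemma~\ref{L1} and a H\"older interpolation tailored to each regime of $q$. The last claim, concerning $\{\mathcal{T}_k(u_n)\}$, is essentially immediate: dropping the nonnegative $u_n^q$ in \eqref{(24)} and using $a(x)\geq\alpha$ gives $\alpha\sum_{j\in J}\int_{\Omega}\vert D_j \mathcal{T}_k(u_n)\vert^{p_j}\leq k\int_{\Omega} f$, so $\mathcal{T}_k(u_n)$ is uniformly bounded in $W_0^{1,(p_j)}(\Omega)$ for every $k>0$. For the three main cases, a common first step will be to split $\Omega=\{u_n\leq 1\}\cup\{u_n>1\}$; on the sublevel set one has $\vert D_j u_n\vert=\vert D_j \mathcal{T}_1(u_n)\vert$, so the $k=1$ estimate together with H\"older controls $\int_{\{u_n\leq 1\}}\vert D_j u_n\vert^{\sigma}$ uniformly for any $\sigma\leq p_j$, and all the real work is on $\{u_n>1\}$.

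For case (i), on $\{u_n\geq 1\}$ one has $(u_n+1/n)^{\theta+1}\leq 2^{\theta+1}u_n^{\theta+1}$, and \eqref{(23)} yields the weighted bound
\[
\sum_{j\in J}\int_{\{u_n>1\}}\frac{\vert D_j u_n\vert^{p_j}}{u_n^{\theta}}\leq C.
\]
From here my plan is routine: write $\vert D_j u_n\vert^{r_j}=\bigl(u_n^{-\theta}\vert D_j u_n\vert^{p_j}\bigr)^{r_j/p_j}u_n^{\theta r_j/p_j}$, apply H\"older with conjugates $p_j/r_j$ and $p_j/(p_j-r_j)$, and check (a purely algebraic verification based on \eqref{(17)}) that the integrability power appearing on $u_n$ equals the anisotropic Sobolev exponent $\overline{r}^{*}=\frac{N(\overline{p}-\theta)}{N-\overline{p}}$ associated with the family $(r_j)$, while the exponent $\theta r_j/p_j$ in the other factor is the $j$-independent constant $\theta r_N/p_N$. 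Summing in $j$ and invoking Lemma~\ref{L1}, formula \eqref{(8)}, applied to $(r_j)$ produces an inequality of the form
\[
\|u_n\|_{L^{\overline{r}^{*}}(\Omega)}^{r_N}\leq C+C\,\|u_n\|_{L^{\overline{r}^{*}}(\Omega)}^{\theta r_N/p_N},
\]
which self-improves (since $\theta<1\leq p_N$ forces $\theta r_N/p_N<r_N$) to a uniform $L^{\overline{r}^{*}}$-bound on $u_n$, and therefore to the desired uniform bound in $W_0^{1,(r_j)}(\Omega)$.

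For case (ii) the weight $u_n^{-\theta}$ from \eqref{(23)} is no longer the right one. The plan is to manufacture a substitute weighted estimate by testing \eqref{(22)} with the admissible cut-off $\varphi=1-(1+u_n)^{-\lambda}$ with $0<\lambda<q$, which lies in $W_0^{1,(p_j)}(\Omega)\cap L^{\infty}(\Omega)$ and vanishes on $\partial\Omega$. Dropping the nonnegative singular term and keeping only the $u_n^q$ contribution from the principal operator leaves
\[
\sum_{j\in J}\int_{\Omega}\frac{u_n^{q}\vert D_j u_n\vert^{p_j}}{(1+u_n)^{\lambda+1}}\leq \frac{C}{\lambda},
\]
which on $\{u_n\geq 1\}$ reduces to $\sum_{j\in J}\int u_n^{-\theta'}\vert D_j u_n\vert^{p_j}\leq C$ with $\theta':=1-q+\lambda\in(0,1)$. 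Applying the H\"older--Sobolev closure of case (i) verbatim, with $\theta'$ in place of $\theta$, then delivers a uniform bound in $W_0^{1,(\eta_j(\lambda))}(\Omega)$ for $\eta_j(\lambda)=\frac{N(\overline{p}-\theta')}{\overline{p}(N-\theta')}p_j$; letting $\lambda\to 0^{+}$ makes $\eta_j(\lambda)$ approach $\frac{N(\overline{p}+q-1)}{\overline{p}(N+q-1)}p_j$ from below, which is exactly \eqref{(017)}. Case (iii) is then a limiting case: taking $\lambda=q-1>0$ in the same test gives $\theta'=0$ and hence $\sum_{j\in J}\int_{\{u_n>1\}}\vert D_j u_n\vert^{p_j}\leq C$ directly.

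The main obstacle I foresee lies in case (ii): the constant in the weighted gradient estimate blows up like $1/\lambda$ as $\lambda\to 0^{+}$, which is the structural reason the exponent in \eqref{(017)} is only approached and not attained. A second delicate point, in both (i) and (ii), is the self-improvement step; it relies on the $j$-independence of the exponent $\theta r_N/p_N$ (respectively $\theta' \eta_N(\lambda)/p_N$) and on the strict inequality $\theta,\theta'<1\leq p_N$ coming from \eqref{(2)}--\eqref{(4)}, so careful bookkeeping of these algebraic constraints at the closure will be essential.
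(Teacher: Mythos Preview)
Your proposal is correct and follows essentially the same strategy as the paper: for case~(i) you extract the weighted bound $\int u_n^{-\theta}|D_j u_n|^{p_j}\le C$ on $\{u_n>1\}$ from \eqref{(23)}, combine it with a H\"older--Sobolev closure tuned so that the integrability exponent on $u_n$ equals $\overline{r}^{*}$, and self-improve; for cases~(ii) and~(iii) you test with $1-(1+u_n)^{-\lambda}$, which is exactly the paper's test function $1-(1+u_n)^{1-\lambda_{\text{paper}}}$ after the reparametrization $\lambda_{\text{paper}}=\lambda+1$. The only cosmetic differences are that the paper invokes the product form \eqref{(7)} of Lemma~\ref{L1} while you use the sum form \eqref{(8)}, and in case~(ii) the paper keeps the full coefficient via $\min\{\alpha,1\}(1+u_n)^{q}\le \alpha+u_n^{q}$ rather than dropping to $u_n^{q}$ and restricting to $\{u_n>1\}$; both variants lead to the same closure.
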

\begin{proof}
\begin{itemize}
\item[i)]  For \(0 < q \le 1 - \theta\), we first note that on the set \(\{u_n \ge 1\}\), the inequality $$\frac{1}{u_{n}^{\theta}}\leq 2^{\theta+1}\frac{u_{n}}{\left(u_{n}+\frac{1}{n}\right)^{\theta+1}}$$ holds. Combining this with estimates \eqref{(23)}--\eqref{(6)} yields
\begin{align*}
\sum_{j\in J}\int_{\{u_{n}\geq 1\}}\frac{\vert D_{j}u_{n}\vert^{p_{j}}}{u_{n}^{\theta}}&\leq 2^{\theta+1}\sum_{j\in J}\int_{\{u_{n}\geq 1\}}\frac{u_{n}\vert D_{j} u_{n}\vert^{p_{j}}}{\left(u_{n}+\frac{1}{n}\right)^{\theta+1}}\leq \frac{2^{\theta+1}}{\mu}\Vert f\Vert_{L^{1}(\Omega)},
\end{align*}
thus
\begin{align}
\label{(28)}
\int_{\{u_{n}\geq 1\}}\frac{\vert D_{j}u_{n}\vert^{p_{j}}}{u_{n}^{\theta}}&\leq  C\quad \forall\; j\in J.
\end{align}
By using H\"older inequality with exponents $\frac{p_{j}}{r_{j}}$ and $\left(\frac{p_{j}}{r_{j}}\right)'$, \eqref{(28)} and the fact that $u_{n}\leq \mathcal{G}_{1}(u_{n})+1$ on the set $\{u_{n}\geq1\}$, we obtain for all $j\in J$
\begin{align}
\label{(29)}
\int_{\Omega}\vert D_{j}\mathcal{G}_{1}(u_{n})\vert^{r_{j}}&=\int_{\{u_{n}\geq 1\}}\frac{\vert D_{j}\mathcal{G}_{1}(u_{n})\vert^{\eta_{j}}}{u_{n}\frac{\theta r_{j}}{p_{j}}}u_{n}^{\frac{\theta r_{j}}{p_{j}}}\nonumber\\
&\leq \left(\int_{\{u_{n}\geq1\}}\frac{\vert D_{j}u_{n}\vert^{p_{j}}}{u_{n}^{\theta}}\right)^{\frac{r_{j}}{p_{j}}}\left(\int_{\{u_{n}\geq 1\}}u_{n}^{\frac{\theta \eta_{j}}{p_{j}-r_{j}}}\right)^{\frac{p_{j}-r_{j}}{p_{j}}}\nonumber\\
&\leq C^{\frac{r_{j}}{p_{j}}}\left(\int_{\{u_{n}\geq 1\}}u_{n}^{\frac{\theta r_{j}}{p_{j}-r_{j}}}\right)^{\frac{p_{j}-r_{j}}{p_{j}}}\nonumber\\
&\leq C\left(\int_{\Omega}\mathcal{G}_{1}(u_{n})^{\frac{\theta r_{j}}{p_{j}-r_{j}}}\right)^{\frac{p_{j}-r_{j}}{p_{j}}}+C.
\end{align}
Based on the preceding inequalities, we infer
\begin{align*}
\prod_{j\in J}\left(\int_{\Omega}\vert D_{j}\mathcal{G}_{1}(u_{n})\vert^{r_{j}}\right)^{\frac{1}{r_{j}N}}&\leq C\prod_{j\in J}\left(\int_{\Omega}\mathcal{G}_{1}(u_{n})^{\frac{\theta r_{j}}{p_{j}-r_{j}}}\right)^{\frac{p_{j}-r_{j}}{r_{j}}\frac{1}{p_{j}N}}+C.
\end{align*}
Thus, By using Lemma \ref{L1} (see \eqref{(7)} with $v=\mathcal{G}_{1}(u_{n})$ and $\delta=\overline{r}^{*}$), we have
\begin{align}
\label{(30)}
\left(\int_{\Omega}\mathcal{G}_{1}(u_{n})^{\overline{r}^{*}}dx\right)^{\frac{1}{\overline{r}^{*}}}&\leq C\prod_{j\in J}\left(\int_{\Omega}\mathcal{G}_{1}(u_{n})^{\frac{\theta r_{j}}{p_{j}-r_{j}}}dx\right)^{\frac{p_{j}-r_{j}}{r_{j}}\frac{1}{p_{j}N}}+C.
\end{align}
Now, if we take $r_{j}=\lambda p_{j}$ with $\lambda\in[0,1)$, and we apply the assumption on $r_{j}$, we obtain
\begin{equation}
\label{(31)}
\frac{\theta r_{j}}{p_{j}-r_{j}}=\frac{\theta \lambda}{1-\lambda}=\frac{N\lambda \overline{p}}{N-\lambda\overline{p}}=\overline{r}^{*},
\end{equation}
and 
$$\lambda=\frac{N(\overline{p}-\theta)}{\overline{p}(N-\theta)}.$$
By \eqref{(30)} and \eqref{(31)}, we have
\begin{align}
\label{(32)}
\left(\int_{\Omega}\vert \mathcal{G}_{1}(u_{n})\vert^{\overline{r}^{*}}\right)^{\frac{N-\lambda\overline{p}}{N\lambda\overline{p}}}\leq C\left(\int_{\Omega} \mathcal{G}_{1}(u_{n})^{\overline{r}^{*}}\right)^{\frac{1-\lambda}{\lambda\overline{p}}}+C.
\end{align}
In addition,
\begin{equation}
\label{(33)}
N>\overline{p}\Leftrightarrow \frac{N-\lambda\overline{p}}{N\lambda\overline{p}}>\frac{1-\lambda}{\lambda\overline{p}}.
\end{equation}
As a result of \eqref{(32)} and \eqref{(33)}, $\{\mathcal{G}_{1}(u_{n})\}_{n}$ is uniformly bounded in $L^{\overline{r}^{*}}(\Omega)$. According to \eqref{(29)}, the sequence $\{\mathcal{G}_{1}(u_{n})\}_{n}$ is uniformly bounded in $W_{0}^{1,(r_{j})}(\Omega)$. 
\par Take $\varphi=\mathcal{T}_{k}(u_{n})$ ($k>0$) as a test function in the weak formulation \eqref{(22)}. Using \eqref{(5)} and \eqref{(6)},  we have
\begin{align*}
\alpha\sum_{j\in J}\int_{\{u_{n}\leq k\}}\vert D_{j}\mathcal{T}_{k}(u_{n})\vert^{p_{j}}+\sum_{j\in J}\int_{\{u_{n}\leq k\}}u_{n}^{q}\vert D_{j}\mathcal{T}_{k}(u_{n})\vert^{p_{j}}\leq k\Vert f\Vert_{L^{1}(\Omega)}.
\end{align*}
By leaving out the non-negative term on the left, we get
\begin{align}
\label{(34)}
\sum_{j\in J}\int_{\{u_{n}\leq k\}}\vert D_{j}\mathcal{T}_{k}(u_{n})\vert^{p_{j}}\leq kC,
\end{align}
taking $k=1$ in \eqref{(34)} we deduce that the sequence $\{\mathcal{T}_{1}(u_{n})\}_{n}$ is uniformly bounded in $W_{0}^{1,(p_{j})}(\Omega)$, hence in $W_{0}^{1,(\eta_{j})}(\Omega)$ (since $\eta_{j}\leq p_{j}$ for any $j\in J$), we obtain the sequence $\{u_{n}\}_{n}$ is uniformly bounded in $W_{0}^{1,(\eta_{j})}(\Omega)$ (Since $u_{n}=\mathcal{G}_{1}(u_{n})+\mathcal{T}_{1}(u_{n})$). 

\item[ii)] If $1-\theta<q\leq1$. Let us consider $\varphi=1-(1+u_{n})^{1-\lambda}$, as a test function in the weak formulation \eqref{(22)}, we obtain
\begin{align*}
&(\lambda-1)\sum_{j\in J}\int_{\Omega}\frac{(a(x)+u_{n}^{q})}{(1+u_{n})^{\lambda}}\vert D_{j}u_{n}\vert^{p_{j}}+\sum_{j\in J}\int_{\Omega}\frac{b(x)u_{n}\vert D_{j}u_{n}\vert^{p_{j}}}{\left(u_{n}+\frac{1}{n}\right)^{\theta+1}}\left[1-(1+u_{n})^{1-\lambda}\right]\\
&\quad=\int_{\Omega} f_{n}\left[1-(1+u_{n})^{1-\lambda}\right].
\end{align*}
By \eqref{(5)}, \eqref{(6)},  dropping positive terms and the fact that $1-(1+u_{n})^{1-\lambda}<1$, we have
\begin{align*}
\sum_{j\in J}\int_{\Omega}\frac{\alpha+u_{n}^{q}}{(1+u_{n})^{\lambda}}\vert D_{j}u_{n}\vert^{p_{j}} &\leq\frac{1}{\lambda-1} \Vert f\Vert_{L^{1}(\Omega)},
\end{align*}
since $q\leq1$, then $\min\{\alpha,1\}(1+u_{n})^{q}\leq \alpha+u_{n}^{q}$, this implies that
\begin{align}
\label{(35)}
\sum_{j\in J}\int_{\Omega}\frac{\vert D_{j}u_{n}\vert^{p_{j}}}{(1+u_{n})^{\lambda-q}}\leq C.
\end{align}
If $r_{j}<p_{j}$ for all $j\in J$, using H\"older's inequality and \eqref{(35)}, we get
\begin{align}
\label{(36)}
\int_{\Omega}\vert D_{j}u_{n}\vert^{r_{j}}&=\int_{\Omega}\frac{\vert D_{j}u_{n}\vert^{r_{j}}}{(1+u_{n})^{\frac{r_{j}}{p_{j}}(\lambda-q)}}(1+u_{n})^{\frac{r_{j}}{p_{j}}(\lambda-q)}\nonumber\\
&\leq C\left(\int_{\Omega}(1+u_{n})^{\frac{r_{j}(\lambda-q)}{p_{j}-r_{j}}}\right)^{\frac{p_{j}-r_{j}}{p_{j}}}
\end{align}
From the previous inequalities, we deduce
\begin{align*}
\prod_{j\in J}\left(\int_{\Omega}\vert D_{j}u_{n}\vert^{r_{j}}\right)^{\frac{1}{r_{j}N}}&\leq C\prod_{j\in J}\left(\int_{\Omega}u_{n}^{\frac{r_{j}(\lambda-q)}{p_{j}-r_{j}}}\right)^{\frac{p_{j}-r_{j}}{r_{j}}\frac{1}{p_{j}N}}+C.
\end{align*}
Thus, By using Lemma \ref{L1} (see \eqref{(7)} with $v=u_{n}$ and $\delta=\overline{r}^{*}$), we have
\begin{align}
\label{(37)}
\left(\int_{\Omega}u_{n}^{\overline{r}^{*}}\right)^{\frac{1}{\overline{r}^{*}}}&\leq C\prod_{j\in J}\left(\int_{\Omega}u_{n}^{\frac{r_{j}(\lambda-q)}{p_{j}-r_{j}}}\right)^{\frac{p_{j}-r_{j}}{\eta_{j}}\frac{1}{p_{j}N}}+C.
\end{align}
We put $r_{j}=\gamma p_{j}$ with $\gamma\in (0,1)$ and using the assumption made on $r_{j}$, we set
\begin{align}
\label{(38)}
\frac{r_{j}(\lambda-q)}{p_{j}-r_{j}}=\frac{\gamma(\lambda-q)}{1-\gamma}=\overline{r}^{*}=\frac{N\gamma \overline{p}}{N-\gamma\overline{p}},
\end{align}
we get
\begin{align*}
&\lambda=\frac{(\overline{p}-\overline{r})N\overline{p}+q\overline{p}(N-\overline{r})}{N-\overline{r}},
\end{align*}
and since $\lambda>1$, on has 
\begin{align}
\label{(39)}
r_{j}<\frac{N(\overline{p}-1+q)}{\overline{p}(N+q-1)}p_{j},\quad \forall\; j\in J.
\end{align}
By \eqref{(37)} and \eqref{(38)}, we have
\begin{align}
\label{(40)}
\left(\int_{\Omega}u_{n}^{\overline{r}^{*}}\right)^{\frac{N-\gamma\overline{p}}{N\gamma\overline{p}}}\leq C\left(\int_{\Omega} u_{n}^{\overline{r}^{*}}\right)^{\frac{1-\gamma}{\gamma\overline{p}}}+C.
\end{align}
In addition,
\begin{equation}
\label{(41)}
N>\overline{p}\Leftrightarrow \frac{N-\gamma\overline{p}}{N\gamma\overline{p}}>\frac{1-\lambda}{\gamma\overline{p}}.
\end{equation}
Hence, it follows from \eqref{(40)} and \eqref{(41)} that $\{u_{n}\}_{n}$ is bounded in $L^{\overline{r}^{*}}(\Omega)$. According to \eqref{(36)} along with \eqref{(37)} we deduce that the sequence $\{u_{n}\}_{n}$ is bounded in $W_{0}^{1,(r_{j})}(\Omega)$.

\item[iii)] If $q > 1$, we choose the test function $\varphi = 1 - (1 + u_{n})^{1-q}$, which yields
\begin{align*}
&(q-1)\sum_{j\in J}\int_{\Omega}\frac{(a(x)+u_{n}^{q})}{(1+u_{n})^{q}}\vert D_{j}u_{n}\vert^{p_{j}}\\&\qquad+\sum_{j\in J}\int_{\Omega}b(x)\frac{u_{n}\vert D_{j}u_{n}\vert^{p_{j}}}{\left(u_{n}+\frac{1}{n}\right)^{\theta+1}}\left[1-(1+u_{n})^{1-q}\right]\\
&\quad=\int_{\Omega} f_{n}\left[1-(1+u_{n})^{1-q}\right].
\end{align*}
By referencing conditions \eqref{(5)} and \eqref{(6)}, omitting the positive terms, and noting that $1 - (1 + u_{n})^{1 - q} < 1$, we can derive
\begin{align*}
\sum_{j\in J}\int_{\Omega}\frac{\alpha+u_{n}^{q}}{(1+u_{n})^{q}}\vert D_{j}u_{n}\vert^{p_{j}} &\leq\frac{1}{q-1} \Vert f\Vert_{L^{1}(\Omega)},
\end{align*}
since $q > 1$, we can establish that $(1 + u_{n})^{q} \leq 2^{q-1}\left[1 + (u_{n})^{q}\right]$. This inequality implies that
\begin{align*}
\frac{\min\{\alpha,1\}}{2^{q-1}}\sum_{j\in J}\int_{\Omega}\vert D_{j}u_{n}\vert^{p_{j}}\leq C,
\end{align*}
from which the boundedness of $u_{n}$ in $W_{0}^{1,(p_{j})}(\Omega)$ follows.
\end{itemize}
\end{proof}

The following lemma will be of central importance in the remainder of the proof, and it will be especially needed in the passage to the limit.
\begin{lem}\label{lmq}
Assume that the assumptions  \eqref{(2)}-\eqref{(6)} hold true.
Then, we have
\begin{equation}\label{ugest}
\int_{\Omega} u_{n}^{q\sigma_j}\left\vert D_{j} u_{n}\right\vert^{\sigma_j(p_{j}-1)}\;dx \leq C\quad\quad \forall \sigma_j<p_j^\prime.
\end{equation}
\end{lem}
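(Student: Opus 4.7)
The target estimate has the form of a weighted integral of $|D_j u_n|^{p_j-1}$ against the power $u_n^q$, and the natural way to control it is to split it, via Hölder's inequality, into a weighted gradient quantity (produced by a well-chosen test function) and a Lebesgue-type integral in $u_n$ alone. The strategy is therefore: (a) derive a family of weighted gradient estimates depending on a free parameter $\gamma>0$; (b) use Hölder to trade $|D_j u_n|^{\sigma_j(p_j-1)}$ for a fractional power of the weighted quantity; and (c) control the residual power of $u_n$ by the Lebesgue integrability furnished by Lemma~\ref{L5}, tuning $\gamma$ in terms of $\sigma_j$.

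\textbf{Step 1 (Weighted gradient estimate).} Insert the admissible test function $\varphi = 1-(1+u_n)^{-\gamma}$, $\gamma>0$, into \eqref{(22)}. Since $\varphi\ge 0$ and the singular lower-order term of \eqref{(21)} is nonnegative, it can be dropped. Using \eqref{(5)} and $\varphi\le 1$ one obtains, for every $\gamma>0$,
\[
\gamma\sum_{j\in J}\int_{\Omega}\frac{(\alpha+u_n^{q})\,|D_j u_n|^{p_j}}{(1+u_n)^{\gamma+1}}\,dx
\;\le\;\|f\|_{L^{1}(\Omega)}.
\]

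\textbf{Step 2 (Hölder splitting).} Fix $\sigma_j<p_j'$; then $\sigma_j(p_j-1)<p_j$, so $\delta_j:=p_j-\sigma_j(p_j-1)>0$. Write
\[
u_n^{q\sigma_j}|D_j u_n|^{\sigma_j(p_j-1)}
=\left[\frac{u_n^{q}|D_j u_n|^{p_j}}{(1+u_n)^{\gamma+1}}\right]^{\sigma_j(p_j-1)/p_j}
\!\cdot\, u_n^{q\sigma_j/p_j}(1+u_n)^{(\gamma+1)\sigma_j(p_j-1)/p_j},
\]
and apply Hölder's inequality with exponents $p_j/[\sigma_j(p_j-1)]$ and $p_j/\delta_j$. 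Using Step~1, this yields
\[
\int_{\Omega}u_n^{q\sigma_j}|D_j u_n|^{\sigma_j(p_j-1)}dx
\;\le\; C_{\gamma}\left(\int_{\Omega}(1+u_n)^{M_j(\gamma)}\,dx\right)^{\delta_j/p_j},
\qquad
M_j(\gamma):=\frac{\sigma_j[q+(\gamma+1)(p_j-1)]}{\delta_j}.
\]

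\textbf{Step 3 (Lebesgue control of $u_n$).} It remains to show that $u_n$ is uniformly bounded in $L^{M_j(\gamma)}(\Omega)$ for some admissible $\gamma>0$. This is handled case by case using Lemma~\ref{L5} combined with the anisotropic Sobolev embedding of Lemma~\ref{L1}: $u_n$ is bounded in $L^{\overline{r}^{*}}(\Omega)$ in case (i), in $L^{s}(\Omega)$ for any $s<N(\overline{p}-1+q)/(N-\overline{p})$ in case (ii), and in $L^{\overline{p}^{*}}(\Omega)$ in case (iii). In the last regime the integrability of $u_n$ can be boosted by applying Lemma~\ref{L1} to $\psi(u_n):=\int_{0}^{u_n}s^{q/p_j}(1+s)^{-(\gamma+1)/p_j}ds$, which belongs to $W_{0}^{1,(p_j)}(\Omega)$ uniformly by Step~1. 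For each fixed $\sigma_j<p_j'$, choose $\gamma>0$ small enough so that $M_j(\gamma)$ lies below the available Lebesgue exponent.

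\textbf{Main obstacle.} The delicate point is not any single inequality but the coupling between $\gamma$ and $\sigma_j$: since $M_j(\gamma)\to\infty$ as $\sigma_j\to p_j'^{\,-}$ (because $\delta_j\to 0^{+}$), the Sobolev integrability provided by Lemma~\ref{L5} alone may not be enough, and one must appeal to the bootstrap via $\psi(u_n)$ together with a careful case distinction on the sign of $q-\gamma-1$. The verification that the parameters can be tuned consistently in each of the three regimes (i), (ii), (iii) is therefore the genuine technical heart of the proof.
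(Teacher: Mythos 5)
Your Steps 1 and 2 track the paper's argument: the same family of test functions $1-(1+u_n)^{-\gamma}$ (the paper writes this as $1-(1+u_n)^{1-\lambda}$ with $\lambda=\gamma+1>1$), and the same Hölder split trading $|D_j u_n|^{\sigma_j(p_j-1)}$ for a fractional power of the weighted quantity against a Lebesgue integral of $(1+u_n)$. The divergence, and the gap, is in Step 3.

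Your proposed closure of the estimate does not work as stated. The function $\psi(u_n)=\int_0^{u_n} s^{q/p_j}(1+s)^{-(\gamma+1)/p_j}\,ds$ depends on $j$ through the exponents $q/p_j$ and $(\gamma+1)/p_j$ in the integrand: it is a family $\psi_j$, one per direction. For each $j$ Step 1 controls only the single derivative $D_j\psi_j(u_n)$ in $L^{p_j}$; it does not control the \emph{full} anisotropic gradient of any one fixed function. Lemma~\ref{L1} requires a single $v$ with all of $D_1 v,\dots,D_N v$ in the respective $L^{p_j}$ spaces, so it cannot be applied to "$\psi(u_n)$" and the bootstrap does not close. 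Moreover, tuning $\gamma$ does not rescue the alternative route through Lemma~\ref{L5}: you correctly observe that $M_j(\gamma)\to\infty$ as $\sigma_j\uparrow p_j'$, and since $M_j(\gamma)\to \sigma_j[q+p_j-1]/\delta_j$ as $\gamma\to 0^+$ still diverges in $\sigma_j$, no choice of $\gamma>0$ keeps $M_j(\gamma)$ below a fixed exponent coming from Lemma~\ref{L5} uniformly in $\sigma_j<p_j'$.

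The device the paper uses to close this precise gap is Lemma~\ref{L2}: the \emph{weighted} anisotropic Sobolev inequality \eqref{(9)}, which bounds $\bigl(\int|v|^z\bigr)^{N/\overline{p}-1}$ by $\prod_j\bigl(\int |D_j v|^{p_j}|v|^{t_j p_j}\bigr)^{1/p_j}$ for a \emph{single} function $v$, while allowing each directional factor to carry its own weight $|v|^{t_j p_j}$. Applying it (with $p_j$ replaced by $\delta_j=\theta p_j$, so $\overline{\delta}=\theta\overline{p}$, and $t_j=q$) to $v=u_n$ matches the $j$-th factor exactly to the Step 1 quantity $\int u_n^{q\theta p_j}|D_j u_n|^{\theta p_j}$, and produces an $L^r$ bound on $u_n$ with $r$ determined self-consistently. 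In other words, Lemma~\ref{L2} is designed precisely so that the "$j$-dependent weight" problem you run into never arises. Without invoking it (or an equivalent weighted Gagliardo--Nirenberg--Troisi inequality), your Step 3 cannot be completed. The parametrization $\sigma_j=\theta p_j'$ with a single $\theta\in(0,1)$, used by the paper, is also essential: it makes the Hölder exponents uniform in $j$ and keeps the application of Lemma~\ref{L2} coherent across directions.
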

\begin{proof}
 Taking $ \varphi(u_n)=1-\frac{1}{(1-u_n)^\lambda},$ with $\lambda>1,$ as test function in the weak formulation \eqref{(21)}, we get
\begin{equation*}
(\lambda-1)\sum_{j\in J}\int_{\Omega} \frac{[a(x)+u_n^q]\vert D_j u_n\vert^{p_{j}}}{(1+u_n)^\lambda}+\sum_{j\in J}\int_{\Omega}b(x)\frac{u_{n}\vert D_{j}u_{n}\vert^{p_{j}}}{\left(  u_{n}+\frac{1}{n}\right)^{\theta+1}}\varphi =\int_\Omega f_n \varphi(u_n).
\end{equation*}
We recall that there exists a positive constant $\mathcal{C}$ such that
$$(a(x)+u_n^q) \geq \mathcal{C}(1+u_n)^q,\quad \quad \forall q>1.$$ 
Dropping the positive term and using the fact that $\vert \varphi(u_n)\vert\leq 1,$ we obtain
\begin{equation}\label{est1}
\sum_{j\in J}\int_{\Omega} \frac{\vert D_j u_n\vert^{p_j}}{(1+u_n)^{\lambda-q}}\leq\frac{1}{\mathcal{C}(\lambda-1)}\int_\Omega \vert f_n\vert .
\end{equation}
For $1 < \sigma_{j} < p'_{j} = \frac{p_{j}}{p_{j} - 1}$, an application of H\"older's inequality together with estimate \eqref{est1} yields, for every $j \in J$,
\begin{align*}
\int_{\Omega} u_{n}^{q \sigma_{j}(p_{j}-1)}\left\vert D_{j} u_{n}\right\vert^{\sigma_{j}(p_{j}-1)} \leq C\left( \int_{\Omega} (1+u_{n})^{\frac{\sigma_{j}(p_{j}-1)(\lambda-q+p_{j} q)}{p_{j}-\sigma_{j}(p_{j}-1)}}\right)^{\frac{p_{j}-\sigma_{j}(p_{j}-1)}{p_{j}}}.
\end{align*}
We now introduce the parameter $\sigma_{j} = \theta\frac{p_{j}}{p_{j}-1}$, with $\theta \in (0,1)$. Under this choice, one readily verifies that
$$
\frac{p_{j}-\sigma_{j}(p_{j}-1)}{p_{j}} = 1 - \theta,\quad \forall\; j\in J,$$
and
$$ \frac{\sigma_{j}(p_{j}-1)(\lambda-q+p_{j} q)}{p_{j}-\sigma_{j}(p_{j}-1)} = \frac{\theta}{1-\theta}[\lambda+q(p_{j}-1)],\quad \forall\; j\in J.
$$
Substituting into the previous estimate leads to
\begin{equation}
\label{E11}
\int_{\Omega} u_{n}^{q \sigma_{j}(p_{j}-1)}\left\vert D_{j} u_{n}\right\vert^{\sigma_{j}(p_{j}-1)} \leq C\left( \int_{\Omega}(1+ u_{n})^{\frac{\theta}{1-\theta}[\lambda+q(p_{j}-1)]}\right)^{1-\theta}, 
\end{equation}
for every $j\in J$. Hence, we deduce the following inequality:
\begin{align*}
&\prod_{i\in J}\left(\int_{\Omega} u_{n}^{q \sigma_{j}(p_{j}-1)}\left\vert D_{j} u_{n}\right\vert^{\sigma_{j}(p_{j}-1)}\;dx\right)^{\frac{1}{\theta p_{j}}}\\
&\quad \leq C\prod_{i\in J}\left( \int_{\Omega} (1+u_{n})^{\frac{\theta}{1-\theta}[\lambda+q(p_{j}-1)]}\;dx\right)^{\frac{1-\theta}{\theta p_{j}}}.
\end{align*}
By invoking the anisotropic inequality \eqref{(9)} with $\delta_{j} = \sigma_{j}(p_{j}-1) \leq p_{j}$ (due to $\theta < 1$), $t_{j} = q \geq 0$, and $\overline{\delta} = \theta \overline{p}$, we obtain
\begin{equation}\label{E12}
\left(\int_{\Omega}u_{n}^{r}\right)^{\frac{N}{\theta\overline{p}}-1} \leq C\prod_{i\in J}\left( \int_{\Omega} (1+u_{n})^{\frac{\theta}{1-\theta}[\lambda+q(p_{j}-1)]}\right)^{\frac{1-\theta}{\theta p_{j}}}.
\end{equation}
In order for the preceding inequality to hold uniformly for all $j \in J$, we must enforce that
$$
r = \frac{\theta}{1-\theta}[\lambda+q(p_{j}-1)], \quad \forall\, j\in J.
$$
This leads to the following system of equations:
\begin{align}
& r=\frac{1+q}{b_{j}(N-1)-1+\frac{1}{\theta p_{j}}}, \quad \forall j\in J, \label{S1} \\
& r=\frac{\theta}{1-\theta}[\lambda+q(p_{j}-1)], \quad \forall j\in J, \label{S2} \\
&\sum_{i\in J}b_{j}=1, \quad b_{j} \geq 0, \quad \forall j\in J. \label{S3}
\end{align}
By combining equations \eqref{S1} and \eqref{S3}, one deduces
$$
r = \frac{N(1+q)\theta \overline{p}}{N-\theta\overline{p}}.
$$
In parallel, equation \eqref{S2} yields
$$
\theta = \frac{N(\overline{p}-\lambda+q)}{\overline{p}[N(1+q)-q\overline{p}-\lambda+q]}.
$$
Since $q \geq 0$ and $\lambda > 1$, it follows that $0 < \theta < 1$. Consequently, by combining these expressions, we arrive at
$$
r = \frac{N(\overline{p}-\lambda+q)}{N-\overline{p}}.
$$
Using this value of $r$ in inequality \eqref{E12}, we find
$$
\left(\int_{\Omega}u_{n}^{r}\right)^{\frac{N}{\theta\overline{p}}-1} \leq C\left(\int_{\Omega}u_{n}^{r}\right)^{\frac{(1-\theta)N}{\theta \overline{p}}}.
$$
Since the assumption $\overline{p} < N$ ensures that the left-hand exponent exceeds the right-hand one, we conclude that $\{u_{n}\}$ is bounded in $L^{r}(\Omega)$. This bound, together with estimate \eqref{E11}, implies that
$$
\int_{\Omega} u_{n}^{q \sigma_{j}(p_{j}-1)}\left\vert D_{j} u_{n}\right\vert^{\sigma_{j}(p_{j}-1)} \leq C, \quad \forall\, j\in J.
$$
Given that $p_{j} > 2$ and $\sigma_{j} > 1$ for all $j\in\mathcal{E}$, we apply Hölder’s inequality to obtain
\begin{align*}
\int_{\Omega} u_{n}^{q \sigma_{j}} \left\vert D_{j} u_{n}\right\vert^{\sigma_{j}(p_{j}-1)} 
&= \int_{\left\{u_{n}<1\right\}} u_{n}^{q \sigma_{j}} \left\vert D_{j} u_{n}\right\vert^{\sigma_{j}(p_{j}-1)} \\
&\qquad+ \int_{\left\{u_{n} \geq 1\right\}} u_{n}^{q \sigma_{j}} \left\vert D_{j} u_{n}\right\vert^{\sigma_{j}(p_{j}-1)}\\
&\leq \int_{\Omega} \left\vert D_{j} \mathcal{T}_{1}(u_{n})\right\vert^{\sigma_{j}(p_{j}-1)}
+ \int_{\Omega} u_{n}^{q \sigma_{j}(p_{j}-1)} \left\vert D_{j} u_{n}\right\vert^{\sigma_{j}(p_{j}-1)} \\
&\leq C \left(\int_{\Omega} \left\vert D_{j} \mathcal{T}_{1}(u_{n})\right\vert^{p_{j}} \right)^{\frac{\sigma_{j}(p_{j}-1)}{p_{j}}} + C \\
&\leq C.
\end{align*}
Thus, it follows that the sequence $\left\lbrace u_{n}^{q}\left\vert D_{j} u_{n}\right\vert^{p_{j}-1}\right\rbrace$ is uniformly bounded in $L^{\sigma_{j}}(\Omega)$ for all $1 < \sigma_{j} < p'_{j}$ and for each $j \in J$.
\end{proof}
Let us specify some useful notation we will use from now on. For any \(n\in \mathbb{N}^{*}\)  we define the following auxiliary functions:
\begin{equation*}
\mathcal{H}_{n}(l)=\int_{0}^{l}\frac{dt}{\left(t+\frac{1}{n}\right)^{\theta}},
\qquad
\mathcal{H}_{\infty}(l)=\int_{0}^{l}\frac{dt}{t^{\theta}},
\end{equation*}
and, let $\gamma=\frac{\nu}{\alpha}>0,$ we denote
\begin{equation*}
\Psi_{n}(l)=e^{-\gamma \mathcal{H}_{n}(l)},\qquad \Psi_{\infty}(l)=e^{-\gamma \mathcal{H}_{\infty}(l)}.
\end{equation*}
Note that, the function \(\mathcal{H}_{\infty}\) is well defined because \(\theta<1\). Moreover, it's clear that
$$\lim_{n\rightarrow \infty}\mathcal{H}_{n}(l)=\mathcal{H}_{\infty}(l)\text{ and }\lim_{n\rightarrow \infty}\Psi_{n}(l)=\Psi_{\infty}(l).$$
Observe that, for any \(\phi, u_{n}\in W_{0}^{1,(p_{j})}(\Omega)\cap L^{\infty}(\Omega)\) with \(\phi\ge0\) , we have \(\Psi_{n}(u_{n})\phi\in W_{0}^{1,(p_{j})}(\Omega)\cap L^{\infty}(\Omega)\).

Now, for fixed \(\lambda>0\) we define the Lipschitz cut-off function $\mathcal{F}_{\lambda}$ as follows
\begin{equation*}
\mathcal{F}_{\lambda}(l)=
\begin{cases}
1, & 0\le l<1,\\
-\dfrac{1}{\lambda}(l-1-\lambda), & 1\le l<1+\lambda,\\
0, & l\ge 1+\lambda.
\end{cases}
\end{equation*}
We stress that 
\begin{equation}\label{dr}
\mathcal{F}^{\prime}_{\lambda}(l):=\frac{-1}{\lambda}\chi_{\lbrace 1\le l<1+\lambda\rbrace}<0.
\end{equation}
We now introduce a lemma that will affirm the positivity of limit solution $u$.
\begin{lem}
\label{Lemma6.2}
Assume that the assumptions  \eqref{(2)}-\eqref{(6)} hold true. Then, \(u>0\) in \(\Omega\).
\end{lem}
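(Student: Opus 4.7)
The plan is to test the regularized weak formulation \eqref{(22)} with the multiplicative test function $\Psi_{n}(u_n)\mathcal{F}_\lambda(u_n)\varphi$, where $\varphi\in C_c^\infty(\Omega)$ is an arbitrary nonnegative smooth function. The exponential factor $\Psi_n$ is designed to absorb the natural-growth singular lower-order term through a Trudinger-type trick, while the Lipschitz cut-off $\mathcal{F}_\lambda$ localises integration to $\{u_n\le 1+\lambda\}$, where all quantities are bounded and the passage to the limit is easier.

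Expanding the derivatives and using the key identity $\Psi_n'(l)=-\gamma\,\Psi_n(l)/(l+\tfrac1n)^\theta$ with the calibrated choice $\gamma=\nu/\alpha$, the term generated by $\Psi_n'$ combines with the lower-order term; exploiting $a(x)\ge\alpha$, $b(x)\le\nu$, and $u_n/(u_n+\tfrac1n)\le 1$ one obtains the pointwise cancellation
\[
-[a(x)+u_n^{q}]|D_j u_n|^{p_j}\Psi_n'(u_n)-b(x)\frac{u_n|D_j u_n|^{p_j}}{(u_n+\tfrac1n)^{\theta+1}}\Psi_n(u_n)\ge 0 .
\]
Because $\mathcal{F}_\lambda'\le 0$ by \eqref{dr}, the piece arising from $\mathcal{F}_\lambda'$ also has a favourable sign, and rearrangement produces the supersolution-type inequality
\[
\sum_{j\in J}\int_\Omega [a+u_n^{q}]|D_j u_n|^{p_j-2}D_j u_n\,\Psi_n(u_n)\mathcal{F}_\lambda(u_n)D_j\varphi\,dx
\;\ge\;\int_\Omega f_n\Psi_n(u_n)\mathcal{F}_\lambda(u_n)\varphi\,dx .
\]

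Next, I pass to the limit $n\to\infty$ at fixed $\lambda$. The truncating role of $\mathcal{F}_\lambda$ together with Lemma \ref{lem1} bounds the integrand on the left uniformly in $L^{p_j'}(\Omega)$, while Lemma \ref{L5} and Lemma \ref{lmq} yield $u_n\to u$ a.e.\ (up to subsequence) along with the a.e.\ convergence $D_j u_n\to D_j u$, obtained through the standard anisotropic monotonicity argument on truncations. Combined with $\Psi_n(u_n)\to\Psi_\infty(u)$ pointwise and $0<\Psi_n\le 1$, Vitali's convergence theorem allows me to pass to the limit in both sides; a subsequent passage $\lambda\to\infty$ (with $\mathcal{F}_\lambda\uparrow 1$ monotonically) via monotone convergence delivers
\[
\sum_{j\in J}\int_\Omega [a+u^{q}]|D_j u|^{p_j-2}D_j u\,\Psi_\infty(u)\,D_j\varphi\,dx
\;\ge\;\int_\Omega f\,\Psi_\infty(u)\,\varphi\,dx .
\]

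To conclude, I note that $f\not\equiv 0$, $f\ge 0$, and $\Psi_\infty>0$ everywhere, so the right-hand side is strictly positive for any $\varphi\ge 0$ whose support meets $\{f>0\}$, which already rules out $u\equiv 0$. Moreover, the inequality identifies $u$ as a nontrivial nonnegative weak supersolution of an anisotropic divergence-form operator whose coefficient $[a(x)+u^{q}]\Psi_\infty(u)$ is locally bounded above by $\beta+M^{q}$ and bounded below by $\alpha\Psi_\infty(M)>0$ on $\{u\le M\}$. The weak Harnack inequality of Theorem \ref{T1}, applied in the form giving strict positivity as in Corollary \ref{C1} (after the standard localisation absorbing the positive $u$-dependent coefficient into the constants), then forces $u>0$ in $\Omega$. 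I expect the main obstacle to be the a.e.\ convergence of the gradients $D_j u_n\to D_j u$ required for identifying the weak limit of the nonlinear principal part, since anisotropy forces a direction-by-direction truncation argument; a secondary subtlety is verifying that Corollary \ref{C1}, originally stated for the pure anisotropic $p_j$-Laplacian, extends to the present operator with the $u$-dependent coefficient $[a+u^q]\Psi_\infty(u)$, which follows from a brief comparison/localisation step on level sets of $u$.
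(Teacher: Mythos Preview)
Your overall strategy matches the paper's: test \eqref{(22)} with $\Psi_n(u_n)\mathcal{F}_\lambda(u_n)\varphi$, exploit the calibration $\gamma=\nu/\alpha$ to absorb the singular lower-order term, and use the sign of $\mathcal{F}_\lambda'$ to discard that contribution. The divergence comes in the limiting procedure and in the final positivity argument, and there the proposal has a genuine gap.

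The paper sends $\lambda\to 0$, not $\lambda\to\infty$. This localises the resulting supersolution inequality to $\{u_n<1\}$ and replaces $u_n$ by the \emph{bounded} truncation $\mathcal{T}_1(u_n)$, so that after $n\to\infty$ one works with $\mathcal{T}_1(u)\in W_0^{1,(p_j)}(\Omega)\cap L^\infty(\Omega)$ and a coefficient $a(x)+\mathcal{T}_1(u)^q\in[\alpha,\beta+1]$. The paper then performs the key change of variable
\[
\mathcal{V}(l)=\int_0^{\mathcal{T}_1(u(l))} e^{-\frac{\gamma}{p_j-1}\mathcal{H}_\infty(t)}\,dt,
\]
which satisfies $|D_j\mathcal{V}|^{p_j-2}D_j\mathcal{V}=\Psi_\infty(\mathcal{T}_1(u))\,|D_j\mathcal{T}_1(u)|^{p_j-2}D_j\mathcal{T}_1(u)$ and thereby \emph{absorbs the factor $\Psi_\infty$ into the unknown}, producing a clean supersolution inequality for an anisotropic operator with bounded, non\-degenerate coefficients. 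Only then do Theorem~\ref{T1} and Corollary~\ref{C1} (via comparison with a solution $\mathcal{Z}$) apply.

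By contrast, you send $\lambda\to\infty$ and arrive at a supersolution inequality for $u$ itself with coefficient $[a(x)+u^q]\Psi_\infty(u)$. Since $f\in L^1(\Omega)$ only, $u$ is not known to be locally bounded; moreover $\Psi_\infty(u)=e^{-\gamma u^{1-\theta}/(1-\theta)}$ makes the coefficient degenerate to $0$ where $u$ is large. Theorem~\ref{T1} requires $u<M$ and a uniformly elliptic coefficient on the cube, neither of which you have. The ``brief comparison/localisation step on level sets of $u$'' you defer is precisely the content of the paper's $\lambda\to0$ truncation together with the change of variable $\mathcal{V}$; it is not a minor technicality but the device that converts your inequality into one to which the available weak Harnack inequality applies.
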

\begin{proof}
Let \(\phi\in W_{0}^{1,(p_{j})}(\Omega)\cap L^{\infty}(\Omega)\) with \(\phi\ge0\). Take
\(\varphi=\Psi_{n}(u_{n})\phi\) as a test function in \eqref{(22)}. Thus, since
\begin{equation*}
D_{j}\varphi=-\gamma \frac{D_{j}u_{n}}{\left(u_{n}+\frac{1}{n}\right)^{\theta+1}}\Psi_{n}(u_{n})\phi+\Psi_{n}(u_{n})D_{j}\phi,\quad\quad \text{ for every }j\in J ,
\end{equation*}
we obtain
\begin{equation*}
\begin{aligned}
&\sum_{j\in J}\int_{\Omega}\left[a(x)+u_{n}^{q}\right] \vert D_{j}u_{n}\vert^{p_{j}-2}D_{j}u_{n} \Psi_{n}(u_{n})D_{j} \phi\\&
\qquad-\gamma\sum_{j\in J}\int_{\Omega} \left[a(x)+u_{n}^{q}\right] \vert D_{j}u_{n}\vert^{p_{j}}\frac{\Psi_{n}(u_{n})\phi}{\left(u_{n}+\frac{1}{n}\right)^\theta} \\
&\qquad+ \sum_{j\in J}\int_{\Omega}b(x)\vert D_{j} u_{n}\vert^{p_{j}}
\frac{u_{n}}{\left( u_{n}+\frac{1}{n}\right)^{\theta+1}}\Psi_{n}(u_{n})\phi\\&\quad=\int_{\Omega} T_n(f) \Psi_{n}(u_{n})\phi.
\end{aligned}
\end{equation*}
Since \(u_{n}\ge0\) and \(\mathcal{T}_{n}(f)\ge \mathcal{T}_{1}(f)\) for \(n\ge1\), recalling \eqref{(5)} and \eqref{(6)}, we get
\begin{equation}
\label{(6.9)'}
\sum_{j\in J}\int_{\Omega}\left[a(x)+u_{n}^{q}\right] \vert D_{j}u_{n}\vert^{p_{j}-2}D_{j}u_{n} \Psi_{n}(u_{n})D_{j} \phi
\ge \int_{\Omega} \mathcal{T}_{1}(f)\,\Psi_{n}(u_{n})\phi.
\end{equation}
Let \(\varphi\in W_{0}^{1,(p_{j})}(\Omega)\cap L^{\infty}(\Omega)\) with \(\varphi\ge0\), and take \(\phi=\mathcal{F}_{\lambda}(u_{n})\varphi\) in \eqref{(6.9)'}, we obtain
\begin{equation*}
\begin{aligned}
&\sum_{j\in J}\int_{\Omega}\left[a(x)+u_{n}^{q}\right] \vert D_{j}u_{n}\vert^{p_{j}-2}D_{j}u_{n} \Psi_{n}(u_{n})\mathcal{F}_{\lambda}(u_{n}) D_{j} \varphi\\&\qquad-\frac{1}{\lambda}\sum_{j\in J}\int_{\Omega}\left[a(x)+u_{n}^{q}\right] \vert D_{j}u_{n}\vert^{p_{j}}\Psi_{n}(u_{n}) \varphi\chi_{\lbrace 1\le u_{n}<1+\lambda\rbrace}\\&\quad
\ge \int_{\Omega} \mathcal{T}_{1}(f)\,\Psi_{n}(u_{n})\phi.
\end{aligned}
\end{equation*}
By \eqref{dr}, we deduce that
\begin{equation*}
\begin{aligned}
\label{(6.10)'}
&\sum_{j\in J}\int_{\Omega}\left[a(x)+u_{n}^{q}\right] \vert D_{j}u_{n}\vert^{p_{j}-2}D_{j}u_{n} \Psi_{n}(u_{n})\mathcal{F}_{\lambda}(u_{n}) D_{j} \varphi\ge \int_{\Omega} \mathcal{T}_{1}(f)\,\Psi_{n}(u_{n})\mathcal{F}_{\lambda}(u_{n})\varphi.
\end{aligned}
\end{equation*}
Letting \(\lambda\to0\), we derive that 
\begin{align}
\label{(6.12)'}
&\sum_{j\in J}\int_{\Omega}\left[a(x)+\mathcal{T}_{1}(u_{n})^{q}\right] \vert D_{j}\mathcal{T}_{1}(u_{n})\vert^{p_{j}-2}D_{j}\mathcal{T}_{1}(u_{n}) \Psi_{n}(\mathcal{T}_{1}(u_{n}))D_{j} \varphi\nonumber\\
&\quad\ge \int_{\lbrace 0\le u_{n}<1 \rbrace} \mathcal{T}_{1}(f)\,\Psi_{n}(\mathcal{T}_{1}(u_{n}))\varphi.
\end{align}
By \eqref{(34)} we have
\begin{align}
\label{(6.13)'}
&\left[a(x)+\mathcal{T}_{1}(u_{n})^{q}\right] \vert D_{j}\mathcal{T}_{1}(u_{n})\vert^{p_{j}-2}D_{j}\mathcal{T}_{1}(u_{n})\nonumber\\
&\quad \rightharpoonup \left[a(x)+\mathcal{T}_{1}(u)^{q}\right] \vert D_{j}\mathcal{T}_{1}(u)\vert^{p_{j}-2}D_{j}\mathcal{T}_{1}(u)
\quad\text{in }L^{(p_{j}^\prime)}(\Omega).
\end{align}
As a consequence, we can pass to the limit, as \(n\to\infty\), in \eqref{(6.12)'} to get
\begin{align}
\label{(6.14)'}
&\sum_{j\in J}\int_{\Omega}\left[a(x)+\mathcal{T}_{1}(u)^{q}\right] \vert D_{j}\mathcal{T}_{1}(u)\vert^{p_{j}-2}D_{j}\mathcal{T}_{1}(u) \Psi_{\infty}(\mathcal{T}_{1}(u))D_{j} \varphi\nonumber\\
&\quad\ge \int_{\lbrace 0\le u\leq 1 \rbrace} \mathcal{T}_{1}(f)\,\Psi_{\infty}(\mathcal{T}_{1}(u))\varphi.
\end{align}
for all nonnegative \(\varphi\in W_{0}^{1,(p_{j})}(\Omega)\).

For each \(j\in J\), we define
\begin{equation*}
\mathcal{V}(l)=\int_{0}^{\mathcal{T}_{1}(u(l))}e^{-\frac{\gamma}{p_{j}-1}\mathcal{H}_{\infty}(t)}\,dt.
\end{equation*}
Consequently, by the chain rule,
\begin{equation*}
\vert D_{j}\mathcal{V}(\mathcal{T}_{1}(u))\vert^{p_{j}-2}D_{j}\mathcal{V}(\mathcal{T}_{1}(u))
= \Psi_{\infty}(\mathcal{T}_{1}(u))\,\vert D_{j}\mathcal{T}_{1}(u)\vert^{p_{j}-2}D_{j}\mathcal{T}_{1}(u).
\end{equation*}
Thus \eqref{(6.14)'} becomes, by using that $\Psi_{\infty}(\mathcal{T}_{1}(u))>\Psi_{\infty}(1)$,
\begin{equation*}
\sum_{j\in J}\int_{\Omega}\left[a(x)+\mathcal{T}_{1}(u)^{q}\right]\vert D_{j}\mathcal{V}(\mathcal{T}_{1}(u))\vert^{p_{j}-2}D_{j}\mathcal{V}(\mathcal{T}_{1}(u)) D_{j}\varphi
\ge\int_{\Omega} h(x)\varphi,
\end{equation*}
where 
\begin{equation*}
h(x)=\mathcal{T}_{1}(f)\,\Psi_{\infty}(1)\chi_{\{0\le u\leq 1\}}>0.
\end{equation*}
Hence, since $0<a(x)+\mathcal{T}_{1}(u)^{q}<\beta+1,$ \(\mathcal{V}(\mathcal{T}_{1}(u))\in W_{0}^{1,(p_{j})}(\Omega)\) is a weak super-solution of
\begin{equation*}
\begin{cases}
-\displaystyle\sum_{j\in J}D_{j}\left(\left[a(x)+\mathcal{T}_{1}(u)^{q}\right]\vert D_{j}\mathcal{Z}\vert^{p_{j}-2}D_{j}\mathcal{Z}\right)=h(x), & \text{in }\Omega,\\
\mathcal{Z}\in W_{0}^{1,(p_{j})}(\Omega). &
\end{cases}
\end{equation*}

By the comparison principle in \(W_{0}^{1,(p_{j})}(\Omega)\) (see \cite{R3}), there exists a positive weak super-solution \(Z\) of the above equation and \(\mathcal{V}(\mathcal{T}_{1}(u))\ge \mathcal{Z}\).  
By Theorem \ref{T1} and Corollary \ref{C1}, we have \(\mathcal{Z}>0\) in \(\Omega\), hence \(\mathcal{V}(\mathcal{T}_{1}(u))>0\) a.e. in \(\Omega\).  
Since \(\mathcal{V}\) is strictly increasing, it follows that \(\mathcal{T}_{1}(u)>0\) a.e. in \(\Omega\). Therefore, \(u>0\) in \(\Omega\).
\end{proof}
As a consequence of Lemma \ref{L5}, there exists a subsequence (not relabelled) and a function
\[
u\in W_0^{1,(r_{j})}(\Omega), \quad\quad \text{ for every }j\in J ,
\]
with the components \(r_{j}\) as in the  Theorem \ref{T1}, such that the following convergences hold.
\begin{equation}\label{uae}
u_n\longrightarrow u\qquad\text{ a.e. in \ }\Omega.
\end{equation}
\begin{equation}\label{wtk}
\mathcal{T}_{k}(u_n)\rightharpoonup \mathcal{T}_{k}(u)\qquad\text{ in }W_0^{1,(p_{j})}(\Omega),
\end{equation}
for every \(k>0\).

To proceed, we establish that the gradients of the approximating solutions $u_n$ converge
almost everywhere in $\Omega$. This convergence is essential, due to the
nonlinearity of the operator, in order to pass to the limit in the approximate
problems and recover the anisotropic limit equation.  In our case, we closely follow the strategy developed in the isotropic proof of \cite{R0}, adapting each step to the present anisotropic operator of
Leray--Lions type.  This careful adaptation allows us to recover the almost-everywhere convergence of $D_{j} u_n$, a fundamental ingredient for the
identification of the limit problem.


\begin{lem}
Assume that  the hypotheses \eqref{(2)}-\eqref{(6)} hold true, and let $\{u_n\}$ be the approximating solutions for the problem \eqref{(21)}. If  $u$ is the limit function of $\{u_n\}$  that satisfies the convergences \eqref{uae} and \eqref{wtk}. Then, up to a subsequence not relabelled,
\begin{equation}\label{gae}
D_{j} u_n \longrightarrow D_{j} u 
\qquad\text{a.e.\ in }\Omega,\quad \text{ for every }j\in J .
\end{equation}
\end{lem}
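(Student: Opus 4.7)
The plan is to first upgrade the weak convergence \eqref{wtk} of the truncations to strong convergence
\[
\mathcal{T}_{k}(u_{n})\longrightarrow \mathcal{T}_{k}(u)\quad\text{in }W_{0}^{1,(p_{j})}(\Omega),\qquad \forall k>0,
\]
and then to deduce \eqref{gae} by a diagonal extraction combined with the uniform tail estimate $\sup_{n}|\{u_{n}>k\}|\to 0$ as $k\to\infty$, which follows from \eqref{(24)} and the Sobolev embedding \eqref{(8)}.

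The core of the argument is an anisotropic Boccardo--Murat scheme, in which both the singular denominator $(u_{n}+\tfrac{1}{n})^{\theta+1}$ and the natural growth in $D_{j}u_{n}$ must be absorbed simultaneously. Set $w_{n}=\mathcal{T}_{k}(u_{n})-\mathcal{T}_{k}(u)$ and test \eqref{(22)} with
\[
\varphi_{n}=\Psi_{n}(u_{n})\,\phi(w_{n}),\qquad \phi(s)=s\,e^{\Lambda s^{2}},
\]
where $\Psi_{n}$ is the exponential weight introduced before Lemma \ref{Lemma6.2} (with $\gamma=\nu/\alpha$) and $\Lambda>0$ is taken sufficiently large. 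The role of $\Psi_{n}$ is precisely to neutralize the singular part of $B_{n}(u_{n})$: after expanding $D_{j}\varphi_{n}$, the bad term $b(x)u_{n}|D_{j}u_{n}|^{p_{j}}\Psi_{n}(u_{n})\phi(w_{n})/(u_{n}+\tfrac{1}{n})^{\theta+1}$ combines with the contribution $-\gamma[a(x)+u_{n}^{q}]|D_{j}u_{n}|^{p_{j}}\Psi_{n}(u_{n})\phi(w_{n})/(u_{n}+\tfrac{1}{n})^{\theta}$ coming from $\Psi_{n}^{\prime}$; using \eqref{(5)}--\eqref{(6)} and the choice $\gamma=\nu/\alpha$, their sum has a favourable sign whenever $\phi(w_{n})\ge 0$. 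The factor $e^{\Lambda s^{2}}$ then absorbs the remaining bounded--coefficient natural--growth contributions on $\{u_{n}\le k\}$ via the classical inequality $\phi^{\prime}(s)-C|\phi(s)|\ge \tfrac{1}{2}$, valid for $\Lambda\ge C^{2}$.

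After these cancellations one reaches
\[
\limsup_{n\to\infty}\sum_{j\in J}\int_{\Omega}[a(x)+u_{n}^{q}]\bigl[|D_{j}u_{n}|^{p_{j}-2}D_{j}u_{n}-|D_{j}\mathcal{T}_{k}(u)|^{p_{j}-2}D_{j}\mathcal{T}_{k}(u)\bigr]D_{j}w_{n}\,\Psi_{n}(u_{n})\phi^{\prime}(w_{n})\le 0,
\]
since the right--hand side contribution $\int f_{n}\Psi_{n}(u_{n})\phi(w_{n})$ vanishes by dominated convergence ($|\phi(w_{n})|\le 2k\,e^{4\Lambda k^{2}}$, $w_{n}\to 0$ a.e.\ by Rellich, $f_{n}\to f$ in $L^{1}(\Omega)$). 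Because $\Psi_{n}(u_{n})\ge \Psi_{\infty}(k)>0$ on $\{u_{n}\le k\}$ and $a(x)+u_{n}^{q}\ge \alpha$, a standard Minty--type argument based on the strict monotonicity of $\xi\mapsto|\xi|^{p_{j}-2}\xi$ yields $D_{j}u_{n}\to D_{j}\mathcal{T}_{k}(u)$ a.e.\ on $\{u<k\}$, and hence the desired strong convergence of $\mathcal{T}_{k}(u_{n})$ in $W_{0}^{1,(p_{j})}(\Omega)$; letting $k\to\infty$ along a diagonal subsequence delivers \eqref{gae}.

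The hardest step will be the algebraic bookkeeping in the test--function computation: one must verify that the ``cross'' terms generated by differentiating $\Psi_{n}$ and by the sign change of $\phi(w_{n})$ on $\{w_{n}<0\}$ can all be controlled, and that the weak limit of $[a(x)+u_{n}^{q}]|D_{j}\mathcal{T}_{k}(u)|^{p_{j}-2}D_{j}\mathcal{T}_{k}(u)$ can be identified so as to justify passing to the $\limsup$ above. The uniform $L^{\sigma_{j}}$-bound furnished by Lemma \ref{lmq}, together with the positivity of $u$ ensured by Lemma \ref{Lemma6.2} (which keeps $\Psi_{\infty}(u)$ bounded away from zero on compact subsets of $\{u\le k\}$), is exactly what allows this identification and closes the argument.
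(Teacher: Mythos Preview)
Your plan invokes the full Boccardo--Murat exponential machinery, but the sign bookkeeping you flag as ``the hardest step'' does not close. After combining the $\Psi_n'$-contribution with $B_n(u_n)$, the bracket $b(x)\tfrac{u_n}{u_n+1/n}-\gamma[a(x)+u_n^q]$ is indeed $\le 0$; this means the combined term has the sign \emph{opposite} to $\phi(w_n)$. Hence on $\{w_n>0\}$ the identity ``monotonicity term $=$ RHS $-$ Combined'' yields only a \emph{lower} bound on the monotonicity integral, not the upper bound you need. The factor $e^{\Lambda s^2}$ cannot repair this: the inequality $\phi'(s)-C|\phi(s)|\ge\tfrac12$ absorbs residuals of the form $C|D_j u_n|^{p_j}|\phi(w_n)|$ with $C$ independent of $n$, whereas your residual still carries the singular weight $(u_n+\tfrac1n)^{-\theta}\Psi_n(u_n)$, which is not uniformly bounded near $\{u_n=0\}$. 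Neither Lemma~\ref{lmq} (which controls $u_n^{q}|D_ju_n|^{p_j-1}$, not the singular lower-order term) nor Lemma~\ref{Lemma6.2} (which gives $u>0$ but no uniform lower bound on $u_n$) addresses this obstruction.

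The paper avoids the exponential scheme entirely by exploiting the $L^1$ estimate already proved in \eqref{(23)}: since $\sum_j\int b(x)\,u_n|D_ju_n|^{p_j}(u_n+\tfrac1n)^{-\theta-1}\le C$ uniformly in $n$, testing \eqref{(22)} with the bounded function $T_h(u_n-\mathcal{T}_k(u))$ controls the entire lower-order contribution by $Ch$ in one stroke. The cross term involving $|D_j\mathcal{T}_k(u)|^{p_j-2}D_j\mathcal{T}_k(u)$ is handled via \eqref{uae}, \eqref{wtk} and the identity $D_j\mathcal{T}_k(u)\,D_jT_h(u-\mathcal{T}_k(u))\equiv 0$, giving
\[
\limsup_{n\to\infty}\sum_{j\in J}\int_\Omega\bigl|D_jT_h(u_n-\mathcal{T}_k(u))\bigr|^{p_j}\le Ch.
\]
A three-set splitting of $\int|D_j(u_n-u)|^{\tau_j}$ (for any $\tau_j<r_j$), H\"older's inequality, and the uniform $W_0^{1,(r_j)}$ bound of Lemma~\ref{L5} then yield $D_ju_n\to D_ju$ strongly in $L^{\tau_j}(\Omega)$, whence \eqref{gae}. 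No exponential weights, no sign-splitting, and no appeal to Lemmas~\ref{lmq} or~\ref{Lemma6.2} are needed at this stage.
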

\begin{proof}
Fix $h,k>0$ and choose $\varphi = T_h\left(u_n-\mathcal{T}_{k}(u)\right)$ as test function in \eqref{(22)}. Using  \eqref{(5)} and  \eqref{(23)}, we get
\begin{align*}
&\alpha\int_{\Omega}\sum_{j\in J}\big\vert D_{j} T_h(u_n-\mathcal{T}_{k}(u))\big\vert^{p_{j}}
 \\
&\quad \le C\,h\|f\|_{L^1(\Omega)}
-\sum_{j\in J}\int_{\Omega}\left[a(x)+u_n^q\right] \vert D_{j} \mathcal{T}_{k}(u)\vert^{p_{j}-2}D_{j} \mathcal{T}_{k}(u)\,D_{j} T_h(u_n-\mathcal{T}_{k}(u))
\end{align*}
Set $M=h+k$. Note that,  for every $j\in J $ we have 
\[
D_{j} T_h(u_n-\mathcal{T}_{k}(u))\ne0 \quad\Longrightarrow\quad u_n\le M,
\]
hence on the support of $\nabla T_h(u_n-\mathcal{T}_{k}(u))$ we may replace $u_n^q$ by $(T_M(u_n))^q$. Therefore, we obtain 
\begin{align}
\label{eqf}
&\alpha\int_{\Omega}\sum_{j\in J}\big\vert D_{j} T_h(u_n-\mathcal{T}_{k}(u))\big\vert^{p_{j}}\nonumber\\
&\quad\le -\sum_{j\in J}\int_{\Omega}\left[a(x)+T_M(u_n)^q\right] \vert D_{j} \mathcal{T}_{k}(u)\vert^{p_{j}-2}D_{j} \mathcal{T}_{k}(u)\,D_{j} T_h(u_n-\mathcal{T}_{k}(u))\nonumber\\
&\qquad+C\,h\|f\|_{L^1(\Omega)}.
\end{align}

By \eqref{wtk}, we have 
\[
D_{j} T_h(u_n-\mathcal{T}_{k}(u)) \rightharpoonup D_{j} T_h(u-\mathcal{T}_{k}(u))\quad \text{in } (L^{p_{j}}(\Omega))^N, \quad \quad \text{ for every }j\in J 
\]
in addition, thanks to \eqref{uae}, we deduce that
\begin{align*}
&\left[a(x)+T_M(u_n)^q\right] \vert D_{j} \mathcal{T}_{k}(u)\vert^{p_{j}-2}D_{j} \mathcal{T}_{k}(u)\\ 
&\quad\rightarrow \left[a(x)+T_M(u)^q\right] \vert D_{j} \mathcal{T}_{k}(u)\vert^{p_{j}-2}D_{j} \mathcal{T}_{k}(u)\quad \text{in } (L^{p_{j}^\prime}(\Omega))^N,\quad \quad \text{ for every }j\in J.
\end{align*}
Now, since $D_{j} \mathcal{T}_{k}(u)\cdot D_{j} T_h(u-\mathcal{T}_{k}(u))\equiv0$ for every $j\in J,$  it follows that
\begin{align*}
&\lim_{n\to\infty}\sum_{j\in J}\int_{\Omega}\left[a(x)+(T_M(u_n))^q\right] \vert D_{j} \mathcal{T}_{k}(u)\vert^{p_{j}-2}D_{j} \mathcal{T}_{k}(u)\,D_{j} T_h(u_n-\mathcal{T}_{k}(u))\\&\quad=\sum_{j\in J}\int_{\Omega}\left[a(x)+(T_M(u))^q\right] \vert D_{j} \mathcal{T}_{k}(u)\vert^{p_{j}-2}D_{j} \mathcal{T}_{k}(u)\,D_{j} T_h(u-\mathcal{T}_{k}(u))=0,
\end{align*}
which implies, after passing to the limit in \eqref{eqf}, that
\begin{equation}\label{eqff}
\alpha\limsup_{n\to\infty}\sum_{j\in J}\int_{\Omega}\big\vert D_{j} T_h(u_n-\mathcal{T}_{k}(u))\big\vert^{p_{j}}
\le C\,h\|f\|_{L^1(\Omega)}.
\end{equation}

 Let $\tau_{j}\in(1,r_{j})$ for all $j\in J$, where $r_{j}$ are as in the  Theorem \ref{T2}. We split
\begin{align*} 
 \sum_{j\in J}\int_{\Omega}\vert D_{j}(u_{n}-u)\vert^{\tau_{j}}&\leq\sum_{j\in J}\int_{\{\vert u_{n}-u\vert\leq h, \vert u\vert\leq k\}}\vert D_{j}(u_{n}-u)\vert^{\tau_{j}}\\
 & +\sum_{j\in J}\int_{\{\vert u_{n}-u\vert> h\}}\vert D_{j}(u_{n}-u)\vert^{\tau_{j}}
 \\
 &+\sum_{j\in J}\int_{\{\vert u_{n}-u\vert\leq h, \vert u\vert> k\}}\vert D_{j}(u_{n}-u)\vert^{\tau_{j}}.
\end{align*} 
Using the H\"older inequality with exponents $\frac{p_{j}}{\tau_{j}}$ and $\frac{r_{j}}{\tau_{j}}$
\begin{align*}
 \sum_{j\in J}\int_{\Omega}\vert D_{j}(u_{n}-u)\vert^{\tau_{j}}&\leq \left(\sum_{j\in J}\int_{\Omega}\big\vert D_{j} T_h(u_n-\mathcal{T}_{k}(u))\big\vert^{p_{j}}\right)^{\frac{\tau_{j}}{p_{j}}}\vert \Omega\vert^{1-\frac{\tau_{j}}{p_{j}}}\\
 &\quad +C\sum_{j\in J}\left[\mbox{meas}\{ \vert u_{n}-u\vert> h\}\right]^{1-\frac{\tau_{j}}{r_{j}}}\\
 &\quad +C \sum_{j\in J}\left[\mbox{meas}\{ \vert u_{n}-u\vert\leq h, \vert u\vert>k\}\right]^{1-\frac{\tau_{j}}{r_{j}}}.
\end{align*}
Thus, we deduce from the previous inequality and \eqref{uae} that
\begin{align*}
\limsup\limits_{h\rightarrow0}\limsup\limits_{n\rightarrow+\infty}\sum_{j\in J}\int_{\Omega}\vert D_{j}(u_{n}-u)\vert^{\tau_{j}}\leq C \sum_{j\in J}\left[\mbox{meas}\{  \vert u\vert>k\}\right]^{1-\frac{\tau_{j}}{r_{j}}}.
\end{align*}
Since 
$$\lim\limits_{k\rightarrow +\infty}\sum_{j\in J}\mbox{meas}\{  \vert u\vert>k\}=0,$$ 
we deduce that  
$$D_{j} u_{n}\rightarrow D_{j} u \text{ in $L^{\tau_{j}}(\Omega)$}, \quad \quad \text{ for every }j\in J $$
which gives, up to a subsequence, \eqref{gae}.
\end{proof}
Using \eqref{gae} and \eqref{uae}, together with the reverse Fatou's Lemma and Lebesgue’s Dominated Convergence Theorem, we can pass to the limit in \eqref{(23)} and obtain, for every \(k>0\),
\begin{equation}\label{kest}
\frac{1}{k}\sum_{j\in J}\int_\Omega \left[a(x)+u^{q}\right]\lvert D_{j} \mathcal{T}_{k}(u)\rvert^{p_{j}}
\le \int_\Omega f\frac{\mathcal{T}_{k}(u)}{k}.
\end{equation}
Fix $k>0$ and set
\begin{equation}\label{Qprt}
\mathcal{Q}(k)=\int_{\Omega} f\frac{\mathcal{T}_{k}(u)}{k}.
\end{equation}
By the dominated convergence theorem and the hypotheses on $f$, we obtain
\begin{equation}
\lim_{k\to\infty}\mathcal{Q}(k)=0.
\end{equation}
Therefore, we conclude
\begin{equation}\label{plim}
\lim_{k\to\infty}\frac{1}{k}\sum_{j\in J}\int_\Omega \left[a(x)+u^{q}\right]\lvert D_{j} \mathcal{T}_{k}(u)\rvert^{p_{j}}
=0.
\end{equation}
Moreover, by applying Fatou’s Lemma once again in view of \eqref{gae}, we may pass to the limit in estimates \eqref{(22)} and \eqref{ugest}, which yields
\begin{equation}\label{gl1}
\sum_{j\in J} b(x)\frac{\lvert D_{j}u\rvert^{p_{j}}}{u^{\theta}}  \in L^{1}(\Omega),
\end{equation}
\begin{equation}\label{pest}
\displaystyle \sum_{j\in J}\left[a(x)+u^{q}\right]\lvert D_{j}u\rvert^{p_{j}-2} D_{j}u \in (L^{\sigma_{j}}(\Omega))^{N} \quad \text{for all } \sigma_{j} < p_{j}'.
\end{equation}

To finish the proof of our main  Theorem, it remains to prove that $u$ is a distributional solution of the Problem \ref{(1)}. This is the goal of the next part.
\subsection{Passing to the limit}
Let $\mathcal{B}\in C^1(\mathbb{R})$ satisfy $0\le \mathcal{B}(l)\le1$ for all $l\in \mathbb{R}$ and
\begin{equation}
\begin{aligned}
\mathcal{B}(l)=
\begin{cases}
1 & \text{ if } 0\leq l\leq \frac{1}{2},\\
0 & \text{ if } l\geq 1
\end{cases}
\end{aligned}
\end{equation}

The proof will be completed in two steps.

\textbf{Step 1. The first inequality:} 

We fix $\psi \in W_0^{1,d_j}(\Omega),\; d_j>p_j$ for every $j\in J$, with $\psi \geq 0$ and take
\[
\varphi=\psi\, \mathcal{B}\left(\frac{u_{n}}{k}\right)
\]
as a test function in \eqref{(22)}. Since, for every $j\in J$,
\[
D_j\varphi= D_j\psi\, \mathcal{B}\left(\frac{u_{n}}{k}\right) + \frac{\psi}{k} \mathcal{B}^\prime\left(\frac{u_{n}}{k}\right) D_j \mathcal{T}_{k}\left(u_{n}\right).
\]
By the hypotheses on $\mathcal{B}$ this yields
\[
\begin{aligned}
&\sum_{j\in J}\int_{\Omega} \left[a(x)+\mathcal{T}_{k}(u_{n})^q\right]\,\vert D_j \mathcal{T}_{k}(u_{n})\vert^{p_j-2}D_j \mathcal{T}_{k}(u_{n})\,D_j\psi \; \mathcal{B}\left(\frac{u_{n}}{k}\right)\\
&\qquad +\frac{1}{k}\sum_{j\in J}\int_{\Omega}\left[a(x)+\mathcal{T}_{k}(u_{n})^q\right]\,\vert D_j \mathcal{T}_{k}(u_{n})\vert^{p_j}\,\psi\; \mathcal{B}^\prime\left(\frac{u_{n}}{k}\right)\\
&\qquad +\sum_{j\in J}\int_{\Omega}\frac{b(x)\,\mathcal{T}_{k}(u_{n})\,\vert D_j \mathcal{T}_{k}(u_{n})\vert^{p_j}}{\left(\mathcal{T}_{k}(u_{n})+\frac{1}{n}\right)^{\theta+1}}\;\psi\; \mathcal{B}\left(\frac{u_{n}}{k}\right)= \int_{\Omega}f_{n}\;\psi\; \mathcal{B}\left(\frac{u_{n}}{k}\right).
\end{aligned}
\]
Hence, using \eqref{(24)}, we have
\[
\begin{aligned}
&\sum_{j\in J}\int_{\Omega} \left[a(x)+\mathcal{T}_{k}(u_{n})^q\right]\vert D_j \mathcal{T}_{k}(u_{n})\vert^{p_j-2}D_j \mathcal{T}_{k}(u_{n})D_j\psi \; \mathcal{B}\left(\frac{u_{n}}{k}\right)\\
&\qquad +\sum_{j\in J}\int_{\Omega}\frac{b(x)\mathcal{T}_{k}(u_{n})\vert D_j \mathcal{T}_{k}(u_{n})\vert^{p_j}}{\left(u_{n}+\varepsilon\right)^{\theta+1}}\psi \mathcal{B}\left(\frac{u_{n}}{k}\right)\\
&\quad \le \int_{\Omega}f_{n}\psi \mathcal{B}\left(\frac{u_{n}}{k}\right)
+\|B'\|_{L^{\infty}(\mathbb{R})}\|\psi\|_{L^{\infty}(\Omega)}\int_{\Omega} f\frac{\mathcal{T}_{k}(u_{n})}{k}.
\end{aligned}
\]
Passing to the limit as $n\to \infty$ is justified by \eqref{uae}, \eqref{wtk}, \eqref{gae}, Fatou's lemma and the dominated convergence theorem. We thus obtain
\[
\begin{aligned}
&\sum_{j\in J}\int_{\Omega} \left[a(x)+\mathcal{T}_{k}(u)^q\right]\vert D_j \mathcal{T}_{k}(u)\vert^{p_j-2}D_j \mathcal{T}_{k}(u)D_j\psi  \mathcal{B}\left(\frac{u}{k}\right)\\
&\qquad +\sum_{j\in J}\int_{\Omega}\frac{b(x)\vert D_j \mathcal{T}_{k}(u)\vert^{p_j}}{\mathcal{T}_{k}(u)^\theta}\psi \mathcal{B}\left(\frac{u}{k}\right)\\
&\quad \le \int_{\Omega} f\psi \mathcal{B}\left(\frac{u}{k}\right)
+\|\mathcal{B}^{\prime}\|_{L^{\infty}(\mathbb{R})}\|\psi\|_{L^{\infty}(\Omega)}\mathcal{Q}(k),
\end{aligned}
\]
for all $\psi \in W_0^{1,d_j}(\Omega),\; d_j>p_j$ with $\psi\ge0$.

Letting $k\to\infty$ and using  \eqref{gl1}, \eqref{pest} and that $\mathcal{B}$ is bounded, we deduce
\[
\sum_{j\in J}\int_{\Omega}\left[a(x)+u^q\right]\vert D_j u\vert^{p_j-2}D_j u D_j\psi
+\sum_{j\in J}\int_{\Omega}\frac{b(x)\vert D_j u\vert^{p_j}}{u^\theta}\psi
\le \int_{\Omega} f\psi,
\]
for every $\psi \in W_0^{1,d_j}(\Omega),\; d_j>p_j$ with $\psi\ge0$.

\textbf{Step 2:  The second inequality:}

Let $\psi\in W_0^{1,p_j}(\Omega)\cap L^\infty(\Omega)$ satisfy $\psi\le0$.  Set
\[
\varphi=\psi\,\mathrm{e}^{-\gamma \mathcal{H}_{n}(u_{n})}\,\mathcal{B}\left(\frac{u_{n}}{k}\right)
\]
as test function in \eqref{(22)}. Since $\mathcal{B}\big(\tfrac{u_{n}}{k}\big)$ and $\mathcal{B}'\big(\tfrac{u_{n}}{k}\big)$ vanish on $\{u_{n}>k\}$, one obtains
\[
\begin{aligned}
&\sum_{j\in J}\int_{\Omega}\left[  a(x)+\mathcal{T}_{k}(u_{n})^q \right]\vert D_j \mathcal{T}_{k}(u_{n})\vert^{p_j-2}D_j \mathcal{T}_{k}(u_{n})D_j\psi\mathrm{e}^{-\gamma \mathcal{H}_{n}(u_{n})} \mathcal{B}\left(\frac{u_{n}}{k}\right)\\
&\qquad -\gamma\sum_{j\in J}\int_{\Omega}\left[a(x)+\mathcal{T}_{k}(u_{n})^q \right]\frac{\vert D_j \mathcal{T}_{k}(u_{n})\vert^{p_j}}{\left(\mathcal{T}_{k}(u_{n})+\frac{1}{n}\right)^\theta}\psi\mathrm{e}^{-\gamma \mathcal{H}_{n}(u_{n})} \mathcal{B}\left(\frac{u_{n}}{k}\right)\\
&\qquad +\frac{1}{k}\sum_{j\in J}\int_{\Omega}\left[a(x)+\mathcal{T}_{k}(u_{n})^q\right]\vert D_j \mathcal{T}_{k}(u_{n})\vert^{p_j}\psi\mathrm{e}^{-\gamma \mathcal{H}_{n}(u_{n})} \mathcal{B}^\prime\left(\frac{u_{n}}{k}\right)\\
&\qquad +\sum_{j\in J}\int_{\Omega}\frac{b(x)\mathcal{T}_{k}(u_{n})\vert D_j \mathcal{T}_{k}(u_{n})\vert^{p_j}}{\left(\mathcal{T}_{k}(u_{n})+\frac{1}{n}\right)^{\theta+1}}\psi\mathrm{e}^{-\gamma \mathcal{H}_{n}(u_{n})} \mathcal{B}\left(\frac{u_{n}}{k}\right)\\
&\quad=\int_{\Omega}f_{n}\psi\mathrm{e}^{-\gamma \mathcal{H}_{n}(u_{n})} \mathcal{B}\left(\frac{u_{n}}{k}\right).
\end{aligned}
\]
Note that, by the assumptions on $a$ and $b$, and since $\psi\le0$, we have for each $j\in J$
\[
\frac{\vert D_j \mathcal{T}_{k}(u_{n})\vert^{p_j}}{\left(\mathcal{T}_{k}(u_{n})+\frac{1}{n}\right)^\theta}\;\psi\;\mathrm{e}^{-\gamma \mathcal{H}_{n}(u_{n})} \mathcal{B}\left(\frac{u_{n}}{k}\right)
\left[\frac{b(x)\,\mathcal{T}_{k}(u_{n})}{\mathcal{T}_{k}(u_{n})+\frac{1}{n}}-\gamma\left[a(x)+u_{n}^q\right]\right]\ge0.
\]

Using \eqref{(24)}, \eqref{uae}, \eqref{wtk}, \eqref{gae}, \eqref{kest} and Fatou's lemma, we let $n\to \infty$ and obtain
\[
\begin{aligned}
&\sum_{j\in J}\int_{\Omega}\left[a(x)+\mathcal{T}_{k}(u)^q\right]\vert D_j \mathcal{T}_{k}(u)\vert^{p_j-2}D_j \mathcal{T}_{k}(u)D_j\psi\mathrm{e}^{-\gamma \mathcal{H}_{\infty}(u)} \mathcal{B}\left(\frac{u}{k}\right)\\
&\qquad-\gamma\sum_{j\in J}\int_{\Omega}\left[a(x)+\mathcal{T}_{k}(u)^q\right]\frac{\vert D_j \mathcal{T}_{k}(u)\vert^{p_j}}{\mathcal{T}_{k}(u)^\theta}\psi\mathrm{e}^{-\gamma \mathcal{H}_{\infty}(u)} \mathcal{B}\left(\frac{u}{k}\right)\\
&\qquad+\sum_{j\in J}\int_{\Omega}\frac{b(x)\vert D_j \mathcal{T}_{k}(u)\vert^{p_j}}{\mathcal{T}_{k}(u)^\theta}\psi\mathrm{e}^{-\gamma \mathcal{H}_{\infty}(u)} \mathcal{B}\left(\frac{u}{k}\right)\\
& \quad\le \int_{\Omega} f\psi\mathrm{e}^{-\gamma \mathcal{H}_{\infty}(u)} \mathcal{B}\left(\frac{u}{k}\right)+\|\mathcal{B}^{\prime}\|_{L^\infty(\mathbb{R})}\|\psi\|_{L^\infty(\Omega)}\mathcal{Q}(k).
\end{aligned}
\]
We now choose a particular $\psi$ and let $k\to\infty$. Take $k>0$ large enough such that
\[
\sigma(k)=\left(-\frac{\gamma(1-\theta)}{2}\log(\mathcal{Q}(k))\right)^{\frac{1}{1-\theta}}.
\]
Remark that, thanks to \eqref{Qprt}, we have 
 $$\lim_{k\to+\infty}\sigma(k)=+\infty.$$ 
  Note also that, by definition,
\begin{equation}\label{expQ}
\mathrm{e}^{\gamma \mathcal{H}_{\infty}(\sigma(k))}=\frac{1}{\sqrt{\mathcal{Q}(k)}}.
\end{equation}
Let $\Phi\in C_c^1(\Omega)$ with $\varphi\le0$. Since $u$ is strictly positive on compact subsets of $\Omega$, we have $u^{-\theta}\Phi\in L^\infty(\Omega)$, and therefore the negative function
\[
\psi=\mathrm{e}^{-\gamma \mathcal{H}_{\infty}(u)}
\mathcal{B}\left(\frac{u}{\sigma(k)}\right)\Phi
\]
belongs to $W_0^{1,p_j}(\Omega)\cap L^\infty(\Omega)$ (recall that $\mathcal{B}$ and $\mathcal{B}'$ have compact support). Using this $\psi$ in the previous inequality, cancelling common terms and applying \eqref{kest} and \eqref{expQ}, we obtain
\[
\begin{aligned}
&\sum_{j\in J}\int_{\Omega}\left[a(x)+\mathcal{T}_{k}(u)^q\right]\vert D_j \mathcal{T}_{k}(u)\vert^{p_j-2}D_j \mathcal{T}_{k}(u)\,D_j\Phi \mathcal{B}\left(\frac{u}{k}\right) \mathcal{B}\left(\frac{u}{\sigma(k)}\right)\\
&\qquad +\sum_{j\in J}\int_{\Omega}\frac{b(x)\vert D_j \mathcal{T}_{k}(u)\vert^{p_j}}{\mathcal{T}_{k}(u)^\theta}\Phi \mathcal{B}\left(\frac{u}{k}\right) \mathcal{B}\left(\frac{u}{\sigma(k)}\right)\\
& \quad\le \int_{\Omega} f\Phi \mathcal{B}\left(\frac{u}{k}\right) \mathcal{B}\left(\frac{u}{\sigma(k)}\right) +\|\mathcal{B}^{\prime}\|_{L^\infty(\mathbb{R})}\|\varphi\|_{L^\infty(\Omega)}\sqrt{\mathcal{Q}(k)}\\
&\qquad+\frac{1}{\sigma(k)}\|\mathcal{B}^{\prime}\|_{L^\infty(\mathbb{R})}\|\varphi\|_{L^\infty(\Omega)}\sum_{j\in J}\int_{\Omega}\left[a(x)+T_{\sigma(k)}(u)^q\right]\vert D_j \mathcal{T}_{\sigma(k)}(u)\vert^{p_j}\\
&\quad \le \int_{\Omega} f\Phi \mathcal{B}\left(\frac{u}{k}\right) \mathcal{B}\left(\frac{u}{\sigma(k)}\right)
+\|\mathcal{B}^{\prime}\|_{L^\infty(\mathbb{R})}\|\varphi\|_{L^\infty(\Omega)}\sqrt{\mathcal{Q}(k)}\\
&\qquad+\|\mathcal{B}^{\prime}\|_{L^\infty(\mathbb{R})}\|\varphi\|_{L^\infty(\Omega)}\mathcal{Q}(\sigma(k)).
\end{aligned}
\]
Finally, passing to the limit as $k\to\infty$ and using that $\mathcal{B}$ is bounded, together with \eqref{kest} and \eqref{plim}, we infer
\[
\sum_{j\in J}\int_{\Omega}\left[a(x)+u^q\right]\,|D_j u|^{p_j-2}D_j u\,D_j\Phi
+\sum_{j\in J}\int_{\Omega}\frac{b(x)\,\vert D_j u\vert^{p_j}}{u^\theta}\Phi
\le \int_{\Omega} f\Phi,
\]
for every $\Phi\in C_c^1(\Omega)$ with $\varphi\le0$. By density this inequality extends to all $\Phi\in  W_0^{1,d_j}(\Omega),\; d_j>p_j$ with $\varphi\le0$.
\[
\sum_{j\in J}\int_{\Omega}\left[a(x)+u^q\right]\vert D_j u\vert^{p_j-2}D_j u D_j\Phi
+\sum_{j\in J}\int_{\Omega}\frac{b(x)\vert D_j u\vert^{p_j}}{u^\theta}\Phi
\le \int_{\Omega} f\Phi.
\]
Putting together the results of both steps we conclude that
\[
\sum_{j\in J}\int_{\Omega}\left[a(x)+u^q\right]|D_j u|^{p_j-2}D_j u D_j\Phi
+\sum_{j\in J}\int_{\Omega}\frac{b(x)\vert D_j u\vert^{p_j}}{u^\theta}\Phi
\le \int_{\Omega} f\Phi,
\]
for every $\Phi\in W_0^{1,d_j}(\Omega),\; d_j>p_j.$ 
 
So that the proof of the theorem is complete.

\textbf{Acknowledgments}:
The authors are thankful to the anonymous referee for his/her careful reading
of the original manuscript, which led to substantial improvements.
\bigskip

\textbf{Data Availability Statement:} Our manuscript has no associated data.

\bigskip
\textbf{Conflict of interest statement:} The authors declare that there is no conflict of interest
regarding the publication of this paper.
\bigskip

\end{document}